\newtheorem{theorem}{Theorem}
\newtheorem{lemma}[theorem]{Lemma}
\newtheorem{proposition}[theorem]{Proposition}
\newtheorem{corollary}[theorem]{Corollary}
\newtheorem{question}[theorem]{Question}
\theoremstyle{definition}
\theoremstyle{remark}
\newtheorem*{remark}{Remark}
\newcommand{\GL}{{\mathrm {GL}}}
\newcommand{\SL}{{\mathrm {SL}}}
\newcommand{\PSL}{{\mathrm {PSL}}}
\newcommand{\Aut}{{\mathrm {Aut}}}
\newcommand{\Irr}{{\mathrm {Irr}}}
\newcommand{\St}{{\mathrm {St}}}
\newcommand{\Sym}{{\mathrm {Sym}}}
\newcommand{\NN}{{\mathbb N}}
\newcommand{\bZ}{\mathbf{Z}}
\newcommand{\bO}{{\bf{O}}}
\newcommand{\bC}{{\mathbf{C}}}
\newcommand{\bN}{\mathbf{N}}
\newcommand{\bF}{\mathbf{F}}
\newcommand{\Al}{\textup{\textsf{A}}}
\newcommand{\Sy}{\textup{\textsf{S}}}
\newcommand{\cp}{{\mathrm {cp}}}
\newcommand{\nor}{\vartriangleleft}
\begin{document}

\title[On a conjecture of Gluck]
{On a conjecture of Gluck}

\author[J.\,P. Cossey]{James P. Cossey}
\address{Department of Mathematics, The University of Akron, Akron,
Ohio 44325, USA} \email{cossey@uakron.edu}

\author[Z. Halasi]{Zolt\'an Halasi} \address{Department of Algebra
and Number Theory, Institute of Mathematics, University of Debrecen,
4010, Debrecen, Pf.~12, Hungary} \email{halasi.zoltan@renyi.mta.hu}

\author[A. Mar\'{o}ti]{Attila Mar\'oti} \address{Fachbereich
Mathematik, Technische Universit\"{a}t Kaiserslautern, Postfach
3049, 67653 Kaiserslautern, Germany \and Alfr\'ed R\'enyi Institute of
  Mathematics, Re\'altanoda utca 13-15, H-1053, Budapest, Hungary}
\email{maroti@mathematik.uni-kl.de \and maroti.attila@renyi.mta.hu}

\author[H.\,N. Nguyen]{Hung Ngoc Nguyen}
\address{Department of Mathematics, The University of Akron, Akron,
Ohio 44325, USA} \email{hungnguyen@uakron.edu}

\dedicatory{Dedicated to Professor Nguy\~\ecircumflex n H.\,V.
H\uhorn ng on the occasion of his sixtieth birthday.}

\thanks{The research of the second author leading to these results has
received funding from the European Union's Seventh Framework Programme
(FP7/2007-2013) under grant agreement no. 318202, from ERC Limits of
discrete structures Grant No.\ 617747 and from OTKA K84233. The research
of the third author was supported by an Alexander
  von Humboldt Fellowship for Experienced Researchers, by the J\'anos
  Bolyai Research Scholarship of the Hungarian Academy of Sciences, by
  OTKA K84233, and by the MTA RAMKI Lend\"ulet Cryptography Research
  Group. The fourth author is partially supported by NSA Young Investigator Grant \#H98230-14-1-0293
    and a BCAS Faculty Scholarship Award from the Buchtel College of Arts and Sciences, The University of Akron.}

\subjclass[2010]{Primary 20C15, 20C30; Secondary 20D06}

\keywords{Finite groups, solvable groups, character degrees, largest
degree, Gluck's conjecture, Jordan-type theorems}

\date{\today}

\begin{abstract} Let $\bF(G)$ and $b(G)$ respectively denote the
  Fitting subgroup and the largest degree of an irreducible complex
  character of a finite group $G$. A well-known conjecture of D.~Gluck
  claims that if $G$ is solvable then $|G:\bF(G)|\leq b(G)^{2}$. We
  confirm this conjecture in the case where $|\bF(G)|$ is coprime to
  6. We also extend the problem to arbitrary finite groups and prove
  several results showing that the largest irreducible character
  degree of a finite group strongly controls the group structure.
\end{abstract}
\maketitle

%%%%%%%%%%%%%%%%%%%%%%%%%%%%%%%%%%%%%%%%%%%%%%%%%%%%%%%%%%%%%%%%%%%%%%

\section{Introduction}

For a finite group $G$, let $\Irr(G)$ denote the set of irreducible
(complex) characters of $G$, and write \[b(G):=\max\{\chi(1)\mid
\chi\in\Irr(G)\},\] so that $b(G)$ is the largest irreducible
character degree of $G$. Also, let $\bF(G)$ denote the Fitting
subgroup of $G$. In an old paper, D.~Gluck~\cite{Gluck} proved that
$|G:\bF(G)|\leq b(G)^{13/2}$ and further conjectured that
$|G:\bF(G)|\leq b(G)^2$ for every solvable group $G$. Although
Gluck's conjecture is still open, various partial results have been
obtained by many authors. For instance, A.~Espuelas~\cite{Espuelas}
verified the conjecture for groups of odd order and this was
extended later by S.~Dolfi and E.~Jabara~\cite{Dolfi-Jabara} to
(solvable) groups with abelian Sylow 2-subgroups. Recently,
Y.~Yang~\cite{Yang} has confirmed the conjecture for solvable
$3'$-groups. The best general bound is due to A.~Moret\'{o} and
T.\,R.~Wolf in~\cite{Moreto-Wolf} where it was proved that
$|G:\bF(G)|\leq b(G)^3$ for every solvable group $G$.

We have seen that all the up-to-date partial results on Gluck's
conjecture have been obtained with an additional condition on the
order of $G$ or $G/\bF(G)$. In this paper we establish the
conjecture with a numerical restriction only on the order of
$\bF(G)$. Our first result may be considered not only as a
generalization of the aforementioned theorem of Moret\'o and Wolf
but also as an important step towards the solution of Gluck's
conjecture. (As usual, $\bF^\ast(G)$ and $\Phi(G)$ respectively
denote the generalized Fitting subgroup and the Frattini subgroup of
a finite group $G$.)

\begin{theorem}\label{theorem-extendMoretoWolf}
  Let $G$ be a finite $\pi$-solvable group where $\pi$ is the set of
  the prime divisors of $|{\bF}^{\ast}(G/\Phi(G))|$. Then $|G:\bF(G)|
  \leq {b(G)}^3$. Furthermore if $|\bF (G/\Phi(G))|$ is not divisible
  by $64$ nor $81$ then $|G:\bF(G)| \leq {b(G)}^2$.
\end{theorem}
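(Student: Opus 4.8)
\emph{Proof strategy.} The plan is to reduce Theorem~\ref{theorem-extendMoretoWolf} to a regular--orbit problem for linear groups, obtain the cubic bound by adapting the method of Moret\'o and Wolf, and then improve the exponent to $2$ by inspecting the possible exceptions --- the $\pi$-solvability hypothesis being exactly what forces the nonsolvable part of the analysis to be coprime.

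\emph{Step 1: Frattini reduction and structure.} Passing to $\overline{G}=G/\Phi(G)$ costs nothing: it is standard that $\bF(\overline{G})=\bF(G)/\Phi(G)$, so $|\overline{G}:\bF(\overline{G})|=|G:\bF(G)|$, while $b(\overline{G})\le b(G)$, $\overline{G}$ is again $\pi$-solvable with the \emph{same} $\pi$ (which was defined through $\overline{G}$ to begin with), and the divisibility hypotheses are statements about $\overline{G}$. So assume $\Phi(G)=1$. Then $\Phi(\bF(G))\le\Phi(G)=1$ forces $V:=\bF(G)$ to be a direct product of elementary abelian groups, hence abelian. Next, the layer $E(G)$ is trivial: a component would be quasisimple and would contribute to $G$ a nonabelian simple composition factor $S$ with $\pi(S)\subseteq\pi(\bF^{\ast}(G))=\pi$, i.e.\ a nonabelian simple $\pi$-group, contradicting $\pi$-solvability (every nonabelian composition factor of a $\pi$-solvable group is a $\pi'$-group). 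Hence $\bF^{\ast}(G)=\bF(G)=V$, $\pi=\pi(V)$, and $C_{G}(V)=Z(\bF^{\ast}(G))=V$. Thus $H:=G/V$ acts faithfully on the abelian $\pi$-group $V$, $H$ is $\pi$-solvable, and --- crucially --- every nonsolvable section of $H$ is a $\pi'$-group and therefore acts coprimely on $V$, so the only non-coprime part of the action comes from solvable $\pi$-sections.

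\emph{Step 2: Reduction to regular orbits.} For any $\lambda\in\Irr(V)$, Clifford theory gives $b(G)\ge|G:I_{G}(\lambda)|=|H:H_{\lambda}|$, where $H_{\lambda}$ is the stabilizer of $\lambda$ in $H$: by the Clifford correspondence every irreducible constituent of $\lambda^{G}$ has degree a multiple of $|G:I_{G}(\lambda)|$, and this needs no splitting of $G$ over $V$. Since $|H:A|\cdot|H:B|\ge|H:A\cap B|$ for subgroups $A,B\le H$, it suffices to prove the following two orbit statements about the faithful action of $H$ on $V$. First, that there exist $\lambda,\mu\in\Irr(V)$ with $H_{\lambda}\cap H_{\mu}=1$ --- this yields $|G:\bF(G)|=|H|\le b(G)^{2}$. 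Second, that there always exist $\lambda,\mu,\nu\in\Irr(V)$ with $H_{\lambda}\cap H_{\mu}\cap H_{\nu}=1$ --- this yields $|G:\bF(G)|\le b(G)^{3}$. The second statement, hence the cubic bound, is obtained by an adaptation of \cite{Moreto-Wolf}: their reduction to a self-centralizing abelian Fitting subgroup is precisely Step~1, and the residual linear-group estimate extends from solvable to $\pi$-solvable groups because the nonsolvable (coprime) sections are covered by the orbit results built on the classification of finite simple groups used, e.g., in \cite{Dolfi-Jabara,Yang}, while the solvable, possibly non-coprime, $\pi$-sections are covered by the classical orbit machinery of \cite{Gluck,Espuelas}.

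\emph{Step 3: The quadratic bound (the main obstacle).} It remains to decide when the first statement of Step~2 fails. Writing $V=\prod_{p}V_{p}$ (each $V_{p}$ being $H$-invariant) and refining through $H$-chief factors, one runs an induction on $|G|$ that, in the usual way, reduces to the case where $H$ acts faithfully and primitively on some $\FF_{p}H$-module, and then invokes the relevant regular-orbit theorems --- the coprime ones for the $\pi'$-sections and the Gluck--Espuelas ones for the solvable $\pi$-part. The hard part is the bookkeeping of the finitely many exceptional configurations that genuinely obstruct a regular orbit on $\Irr(V)\times\Irr(V)$: one must check that in each of them $V$ contains an $H$-section of order at least $2^{6}$ or $3^{4}$, so that the quadratic bound can fail only when $64$ or $81$ divides $|V|=|\bF(G/\Phi(G))|$. (For every prime $p\ge 5$ there is already a regular orbit on the $p$-part, which is why no exceptional prime beyond $2$ and $3$ arises --- compare the odd-order and $3'$ cases of Gluck's conjecture settled in \cite{Espuelas,Yang}.) Completing this case analysis, and verifying that the exceptions are exhausted by the $2^{6}$- and $3^{4}$-type modules, is the technical heart of the argument.
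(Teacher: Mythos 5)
Your Steps 1--2 are essentially the paper's own reduction: after Gasch\"utz/Frattini considerations one views $\Irr(\bF(G)/\Phi(G))$ as a faithful completely reducible module for $H=G/\bF(G)$, and Clifford theory converts a base of size $2$ (resp.\ $3$) for $H$ on this module into $|G:\bF(G)|\le b(G)^2$ (resp.\ $b(G)^3$). The gap is that you never actually establish those base-size statements, and they are the entire content of the theorem. The paper does not get them by ``adapting Moret\'o--Wolf'' or from the ``classical orbit machinery'' of Gluck and Espuelas; it invokes a dedicated, nontrivial result (Theorem~\ref{c3}, the Halasi--Mar\'oti theorem) that every irreducible $p$-solvable subgroup of $\GL(V)$ in characteristic $p$ has a base of size at most $3$, and at most $2$ when $p\ge 5$, and then handles the remaining characteristic-$2,3$ configurations with $|V|$ not divisible by $64$ or $81$ (hence $|V|\le 32$ or $27$) by explicit GAP computation (Lemma~\ref{lemma key}). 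Your parenthetical claim that ``for every prime $p\ge5$ there is already a regular orbit on the $p$-part'' is either false as stated (regular orbits on $V_p$ need not exist) or is precisely this hard theorem in disguise; it does not follow from Espuelas or Yang, whose hypotheses (odd order, $3'$-groups) constrain $G/\bF(G)$, not the characteristic of the module, nor from Dolfi--Jabara, which concerns solvable groups. Your Step 3 explicitly defers ``the technical heart,'' so the proof is not complete.

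A secondary structural problem: for the quadratic bound you propose to reduce ``in the usual way'' to $H$ acting primitively on an irreducible constituent and then classify exceptions. But the genuine obstructions to base size $2$ in characteristics $2$ and $3$ include \emph{imprimitive} groups --- e.g.\ $\GL(2,2)\wr\Sym(3)$ on a module of order $64$ and $\GL(2,3)\wr\Sym(2)$ on one of order $81$, as noted in the remark after Lemma~\ref{lemma key} --- so a reduction to the primitive case does not suffice there; Dolfi's primitive-case theorem is used in this paper only for Theorem~\ref{primitive}, not for Theorem~\ref{theorem-extendMoretoWolf}. The paper avoids this by letting the hypothesis ``$64\nmid|V|$ and $81\nmid|V|$'' cap the size of any irreducible constituent in characteristic $2$ or $3$, after which a finite computation closes the argument. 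Without the Halasi--Mar\'oti input (or an independent proof of it) and a correct treatment of the small characteristic-$2,3$ modules, your outline does not yield either exponent.
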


Combining the second part of Theorem \ref{theorem-extendMoretoWolf}
with the results of Dolfi, Jabara and Yang mentioned above, we obtain

\begin{corollary}\label{corollary} Gluck's conjecture is true unless
  possibly if $|G/\bF(G)|$ is divisible by $6$ and $|\bF(G/\Phi(G))|$
  is divisible by $64$ or $81$.
\end{corollary}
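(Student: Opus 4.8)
The plan is to argue by contradiction, so let $G$ be a counterexample to Gluck's conjecture of least order: $G$ is solvable and $|G:\bF(G)| > b(G)^{2}$, and the goal is to place $G$ in the exceptional set. Since $\bF(G/\Phi(G)) = \bF(G)/\Phi(G)$, $b(G/\Phi(G)) \le b(G)$ and $|G:\bF(G)| = |(G/\Phi(G)):\bF(G/\Phi(G))|$, the minimality of $G$ lets me assume $\Phi(G) = 1$; then $\bF(G) = \bF^{\ast}(G)$ is a product of elementary abelian groups with $C_{G}(\bF(G)) = \bF(G)$, and after the usual further reduction to a faithful completely reducible module I take $\bF(G) = V$ to be an elementary abelian $p$-group on which $H := G/\bF(G)$ acts faithfully, so that $|G:\bF(G)| = |H|$ and $|\bF(G/\Phi(G))| = |V| = p^{k}$.

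By the contrapositive of the second assertion of Theorem~\ref{theorem-extendMoretoWolf} — applicable since a solvable group is $\pi$-solvable for every $\pi$ — the inequality $|G:\bF(G)| > b(G)^{2}$ forces $64 \mid |V|$ or $81 \mid |V|$; thus $p \in \{2,3\}$, with $k \ge 6$ if $p = 2$ and $k \ge 4$ if $p = 3$. It remains to prove $6 \mid |H|$, so assume instead that $2 \nmid |H|$ or $3 \nmid |H|$. If $2 \nmid |H|$, then a Sylow $2$-subgroup of $G$ is either $V$ (when $p = 2$, hence elementary abelian) or trivial (when $p = 3$, since then $|G| = p^{k}|H|$ is odd); in either case $G$ has an abelian Sylow $2$-subgroup, so Gluck's conjecture holds for $G$ by the theorem of Dolfi and Jabara, a contradiction. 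If $3 \nmid |H|$ and $p = 2$, then $|G| = 2^{k}|H|$ is a $3'$-number, so $G$ is a solvable $3'$-group and Gluck's conjecture holds for $G$ by Yang's theorem, again a contradiction.

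The only surviving configuration is $p = 3$, $k \ge 4$, and $H \le \GL_{k}(3)$ acting faithfully with $|H|$ coprime to $3$, so $G = V \rtimes H$ with $V = \FF_{3}^{k}$; here I would finish with a lower bound on $b(G)$. By Clifford theory over $V$, for $\lambda \in \Irr(V)$ with stabiliser $H_{\lambda} = \Stab_{H}(\lambda)$ the inertia group $I_{G}(\lambda) = V\,H_{\lambda}$ is a split extension of $H_{\lambda}$ by $V$ with $\gcd(|V|,|H_{\lambda}|) = 1$; hence $\lambda$ extends to $I_{G}(\lambda)$, and inducing such an extension shows $b(G) \ge [H:H_{\lambda}]$, the size of the $H$-orbit of $\lambda$ on $\widehat{V}$. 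Thus it suffices to prove that the coprime linear group $H$ has an orbit on $\widehat{V} \cong \FF_{3}^{k}$ of size exceeding $|H|^{1/2}$, and invoking such an estimate yields the required contradiction.

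I expect this last step to be the genuine obstacle: controlling orbit sizes of a faithful, completely reducible $3'$-subgroup of $\GL_{k}(3)$ when $k$ is as small as $4$ is exactly the borderline range of the regular/large-orbit theorems for coprime linear groups in the work of Espuelas, Dolfi and Yang — and the fact that $k = 4$ is the critical value is precisely what the exponent in $81 = 3^{4}$ records in Theorem~\ref{theorem-extendMoretoWolf}. Everything preceding the orbit estimate is bookkeeping together with the cited theorems, but that estimate is where the arithmetic of coprime actions is essential.
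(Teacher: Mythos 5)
Your write-up is incomplete exactly where you flag it: the configuration $3\nmid|G/\bF(G)|$ but $\bO_3(G)\neq 1$ is left resting on an unproven estimate, namely that a solvable $3'$-group $H$ acting faithfully and completely reducibly in characteristic $3$ has an orbit of length at least $|H|^{1/2}$ on the dual module. That estimate is not routine bookkeeping; it is essentially the main content of Yang's paper, and the corollary is obtained in the paper precisely by citing the results in their strong, ``module-action'' forms: Yang's Theorem~2.5 applies under the hypothesis $3\nmid|G/\bF(G)|$ (not ``$G$ is a $3'$-group''), and Dolfi--Jabara's Theorem~3 under the hypothesis that the Sylow $2$-subgroups of $G/\bF(G)$ are abelian. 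Because you only allowed yourself the weaker statements quoted in the introduction, you had to re-derive the stronger ones: you did so successfully for the prime $2$ (after the Frattini reduction, $2\nmid|G/\bF(G)|$ makes the Sylow $2$-subgroup of $G$ equal to $\bO_2(G)$, which is elementary abelian since $\Phi(G)=1$, so the ``abelian Sylow $2$'' version suffices), but the analogous trick is unavailable at the prime $3$, and your Clifford-theoretic setup only shifts the burden onto the orbit bound you do not prove. So as it stands the proposal does not establish the corollary; the paper's own proof is just the combination of the three citations, with no minimal counterexample or Frattini reduction needed once the cited theorems are taken in their actual forms.

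A secondary flaw: the reduction to $\bF(G)=V$ an elementary abelian $p$-group for a \emph{single} prime $p$ is not justified. With $\Phi(G)=1$, Gasch\"utz gives a faithful completely reducible action of $G/\bF(G)$ on $\bF(G)$, but $\bF(G)$ is in general a direct product of elementary abelian groups for several primes (this is why Lemma~\ref{lemma key} is stated for modules of mixed characteristic), and nothing allows you to assume homogeneity. This matters for your case division: for instance, if $3\nmid|G/\bF(G)|$ while $|\bF(G)|$ is divisible by $64$ and also by $3$, then your ``$p=2$, so $G$ is a $3'$-group and Yang applies'' branch is vacuous and the case again falls into the unresolved bucket. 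Your prime-$2$ branch does survive in the mixed setting, but the analysis must be phrased in terms of the $2$- and $3$-parts of $|\bF(G)|$ rather than a single $p$, and the genuine gap at the prime $3$ remains.
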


\begin{proof} This follows from ~\cite[Theorem 3]{Dolfi-Jabara},
  \cite[Theorem~2.5]{Yang}, and
  Theorem~\ref{theorem-extendMoretoWolf}.
\end{proof}

We believe that the following result, which is a consequence of a
theorem of Dolfi~\cite{Dolfi}, will also be useful in a future
general attack on Gluck's conjecture.

\begin{theorem}\label{primitive}
  Let $G$ be a finite solvable group with $G/\bF(G)$ acting
  primitively on $\bF(G)/\Phi(G)$. Then Gluck's conjecture is true for
  $G$.
\end{theorem}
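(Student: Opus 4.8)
The plan is to reduce to the case $\Phi(G)=1$, reinterpret Gluck's bound in terms of orbit lengths of $G/\bF(G)$ on $\Irr(\bF(G))$, and then feed the primitive action into Dolfi's theorem on bases of primitive solvable linear groups.

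First I would reduce to $\Phi(G)=1$. Since $\bF(G/\Phi(G))=\bF(G)/\Phi(G)$, $\Phi(G/\Phi(G))=1$, and $(G/\Phi(G))/\bF(G/\Phi(G))\cong G/\bF(G)$ acts on $\bF(G)/\Phi(G)=\bF(G/\Phi(G))/\Phi(G/\Phi(G))$ exactly as in the hypothesis, the assumption is inherited by $G/\Phi(G)$; and since $b(G/\Phi(G))\le b(G)$ while $|G/\Phi(G):\bF(G/\Phi(G))|=|G:\bF(G)|$, it suffices to treat $G/\Phi(G)$. So assume $\Phi(G)=1$. By Gasch\"utz's theorem, $V:=\bF(G)$ is then a faithful completely reducible module for $H:=G/\bF(G)$ with $C_G(V)=V$, and by hypothesis this module is primitive, hence irreducible; thus $V$ is an elementary abelian $p$-group for some prime $p$ and $H$ is a solvable group acting faithfully and primitively on $V$.

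Next I would record the needed lower bound for $b(G)$. Fix $\theta\in\Irr(V)$, let $T=I_G(\theta)$ be its inertia group, and pick any $\psi\in\Irr(T\mid\theta)$; by the Clifford correspondence $\psi^G\in\Irr(G)$, so $b(G)\ge\psi^G(1)=|G:T|\,\psi(1)\ge|G:T|=|H:\Stab_H(\theta)|$. Hence $b(G)$ is at least the length of every $H$-orbit on $\Irr(V)$. Writing $\widehat V=\Irr(V)$ for the dual module --- on which $H$ still acts faithfully, since the pairing $V\times\widehat V\to\CC^\times$ is nondegenerate, and still primitively, since the dual of a primitive module is primitive --- we conclude $b(G)\ge m$, where $m$ denotes the largest $H$-orbit length on $\widehat V$.

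Finally I would invoke Dolfi's theorem \cite{Dolfi}: a finite solvable group acting faithfully and primitively on a finite vector space has a base of size at most $2$. Applied to $H$ acting on $\widehat V$, this produces $\mu_1,\mu_2\in\widehat V$ with $\Stab_H(\mu_1)\cap\Stab_H(\mu_2)=1$, and therefore
\[
|G:\bF(G)|=|H|=|H:\Stab_H(\mu_1)\cap\Stab_H(\mu_2)|\le|H:\Stab_H(\mu_1)|\cdot|H:\Stab_H(\mu_2)|\le m^2\le b(G)^2 .
\]
Everything but this last step is routine; the real content, and the main obstacle, is pinning down Dolfi's result in exactly the form needed --- in particular that it requires no coprimality between $|V|$ and $|H|$ (note $p$ may divide $|H|$, as already for $\GL(2,3)$ on $\FF_3^2$), and that any short list of small primitive configurations excluded from the base-size-$2$ conclusion can be disposed of by computing $b(V\rtimes H)$ directly and comparing it with $|H|$.
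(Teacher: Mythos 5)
Your proposal is correct and follows essentially the same route as the paper: Clifford theory turns a large $G/\bF(G)$-orbit on $\Irr(\bF(G)/\Phi(G))$ into a large character degree, and Dolfi's theorem on primitive solvable linear groups supplies the size-two base. The obstacle you flag at the end is genuine and cannot be argued away: Dolfi's theorem has exceptional cases, and after checking them by computer exactly one configuration survives, namely $|V|=3^4$ with $H$ the unique (up to conjugacy) solvable subgroup of $\GL(4,3)$ of order $1152$, which has no base of size two and indeed no orbit of length at least $|H|^{1/2}$ on the relevant module, so the main-line inequality really does fail there. Your plan to dispose of this case by computing $b(V\rtimes H)$ directly does succeed (one gets $b\geq 34$), but the paper handles it differently: it verifies by GAP that $k(HV)\leq |V|=81$, passes this through the Frattini subgroup with Nagao's theorem to get $k(G)\leq |\bF(G)|$, and concludes via $|G|/k(G)\leq b(G)^2$. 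Your upfront reduction to $\Phi(G)=1$, using $b(G/\Phi(G))\leq b(G)$ and $\bF(G/\Phi(G))=\bF(G)/\Phi(G)$, is a clean substitute for that use of Nagao's theorem; the two treatments of the exceptional group are otherwise equivalent in content, both ultimately resting on a small computer check.
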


Note that in Theorem~\ref{primitive} the factor group $G/\bF(G)$
always acts completely reducibly and faithfully on $\bF(G)/\Phi(G)$,
whenever $G$ is solvable.  This is a theorem of W.~Gasch\"utz
\cite[III.~4.2,~4.4,~4.5]{Huppert}.

While the problem of bounding the index of the Fitting subgroup of a
finite group by its largest character degree has been studied
extensively for solvable groups, not much has been done for
arbitrary groups. We are aware of only one known result which is due
to Gluck and is in the same paper mentioned above. That is, there
exists a constant $c$ such that $|G:\bF(G)|\leq b(G)^c$ for every
finite group $G$. Another goal of this paper is to find an explicit
polynomial bound for $|G:\bF(G)|$ in terms of $b(G)$ for an
arbitrary group $G$. Surprisingly, by using a result of
R.\,M.~Guralnick and G.\,R.~Robinson~\cite{Guralnick-Robinson} on
the so-called commuting probability of finite groups, we easily
deduce that this unspecified constant $c$ can be taken to be $4$.

\begin{theorem}\label{main-theorem-b(G)^4} For every finite group $G$, we
  have $|G:\bF(G)|\leq b(G)^{4}$.
\end{theorem}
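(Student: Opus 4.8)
The plan is to reduce to a statement about the commuting probability and then invoke the Guralnick–Robinson bound referenced in the introduction. Recall that the commuting probability $\mathrm{cp}(G)$ equals $k(G)/|G|$, where $k(G) = |\Irr(G)|$ is the number of conjugacy classes, and also that
\[
\mathrm{cp}(G) = \frac{1}{|G|^2}\sum_{g \in G} |\bC_G(g)| = \frac{1}{|G|}\sum_{\chi \in \Irr(G)} 1 \cdot \text{(class count)},
\]
but the identity I actually want is the character-theoretic lower bound $k(G) \ge |G|/b(G)^2$, which follows at once from $\sum_{\chi \in \Irr(G)} \chi(1)^2 = |G|$ together with $\chi(1) \le b(G)$ for every $\chi$: indeed $|G| = \sum_\chi \chi(1)^2 \le b(G)^2 \sum_\chi 1 = b(G)^2 k(G)$. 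Hence $\mathrm{cp}(G) = k(G)/|G| \ge 1/b(G)^2$.

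Next I would bring in the Guralnick–Robinson theorem \cite{Guralnick-Robinson}. Their result states (in the form I need) that if $\mathrm{cp}(G) > q^{-k}$ for a prime power context, or more precisely in the clean qualitative version: there is a function controlling $|G : \bF(G)|$ — actually the precise statement I want is that $|G/\bF(G)| \le \mathrm{cp}(G)^{-2}$, or possibly that $\mathrm{cp}(G) \, |G : \bF(G)|^{1/2} \le$ const; the exact exponent is what makes the final bound come out as $b(G)^4$ rather than some other power. Granting the inequality $|G : \bF(G)| \le \mathrm{cp}(G)^{-2}$ from \cite{Guralnick-Robinson}, we combine it with the character-theoretic estimate $\mathrm{cp}(G) \ge b(G)^{-2}$ to get
\[
|G : \bF(G)| \le \mathrm{cp}(G)^{-2} \le \bigl(b(G)^{-2}\bigr)^{-2} = b(G)^4,
\]
which is exactly the assertion.

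The main obstacle is locating the correct quantitative form of the Guralnick–Robinson estimate and checking that it really yields the exponent $2$ on $\mathrm{cp}(G)^{-1}$ when passing to $|G : \bF(G)|$; their paper is phrased in terms of a detailed analysis of $\mathrm{cp}(G)$ for groups with a given generalized Fitting structure, so some care is needed to extract a single clean inequality of the shape above. I expect the argument to first pass to $G/\bF(G)$ (noting $\bF(G/\bF(G))$ need not be trivial, so one should really iterate or work with $\bF^\ast$), and then apply the structural bound there; one must also check that replacing $b(G)$ by $b(G/\bF(G)) \le b(G)$ is harmless, which it is since character degrees of quotients divide, or at least are bounded by, $b(G)$. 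Once the right citation-level inequality is in hand, the proof is the two-line chain of inequalities displayed above, so the entire content of the theorem is this juxtaposition of an elementary character-degree bound with a deep but black-boxed commuting-probability theorem.
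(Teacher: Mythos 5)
Your proposal is correct and is essentially the paper's own proof: both combine the elementary bound $|G|/k(G)\leq b(G)^2$ (equivalently $\cp(G)\geq b(G)^{-2}$) with the Guralnick--Robinson inequality $\cp(G)\leq |G:\bF(G)|^{-1/2}$, which is precisely \cite[Theorem~10]{Guralnick-Robinson} and is exactly the ``granted'' inequality $|G:\bF(G)|\leq \cp(G)^{-2}$ you were unsure about. That citation-level inequality holds for $G$ itself, so none of the extra maneuvers you contemplate (passing to $G/\bF(G)$, iterating, or working with $\bF^{\ast}$) are needed.
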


One might ask for the best possible bounding constant $c$. We have
observed that $|G|\leq b(G)^{3}$ for all non-abelian finite simple
groups $G$ (see Theorem~\ref{main theorem 2}) and the bounding
constant $3$ cannot be lowered as the simple linear groups
$\SL(2,2^f)$ show. (Recall that $|\SL(2,2^f)|=2^f(4^f-1)$ and
$b(\SL(2,2^f))=2^f+1$.) As we know of no finite group $G$ with
$|G:\bF(G)|>b(G)^3$ we put forward the following

\begin{question}\label{conjecture}
  Is it true that $|G:\bF(G)|\leq b(G)^{3}$ for every finite group
  $G$?
\end{question}

Indeed, Theorem~\ref{theorem-extendMoretoWolf} already answers
Question~\ref{conjecture} affirmatively in the case where $G$ is a
finite $\pi$-solvable group where $\pi$ is the set of the prime
divisors of $|{\bF}^{\ast}(G/\Phi(G))|$. To further support this
question, we answer it in the case where $G/\bF(G)$ has no abelian
composition factor.

\begin{theorem}\label{main theorem productofcompositionfactors} Let
  $G$ be a finite group. Then the product of the orders of the
  non-abelian composition factors of $G$ is at most $b(G)^3$.
\end{theorem}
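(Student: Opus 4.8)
The plan is to reduce the statement to a fact about non-abelian simple groups and then glue together the contributions of the simple factors using a submultiplicativity property of $b$. First I would observe that the quantity we want to bound — call it $n(G)$, the product of the orders of the non-abelian composition factors of $G$, counted with multiplicity — does not change when we pass to $G/\soc$ of the solvable radical, nor when we quotient by any normal subgroup contained in $\bF(G)$; more usefully, if $N\nor G$ then $n(G)=n(N)\cdot n(G/N)$. So the key input should be the single-simple-group estimate: for every non-abelian finite simple group $S$ one has $|S|\le b(S)^3$. This is exactly the content alluded to in the paragraph preceding Question~\ref{conjecture} (and stated as Theorem~\ref{main theorem 2}), so I would invoke it as a black box; its proof is a case check over the classification, using known lower bounds for $b(S)$ (the Steinberg character degree for groups of Lie type, and the appropriate estimates for alternating and sporadic groups), with $\SL(2,2^f)=\PSL(2,2^f)$ being the extremal family.

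Next I would set up the inductive gluing step. Write the non-abelian composition factors of $G$ as $S_1,\dots,S_k$ (with multiplicity). The crucial inequality is that $b(G)$ is bounded below by a product over a suitably chosen chain: more precisely, using a chief series of $G$ whose non-abelian chief factors are (powers of) the $S_i$, I want to show $b(G)\ge \prod_{i=1}^k b(S_i)$, or at least $b(G)^3 \ge \prod_i |S_i|$ directly. The mechanism is the standard fact that if $N\nor G$ and $\theta\in\Irr(N)$ is $G$-invariant then $\theta$ extends to a character of $\chi\in\Irr(G)$ after multiplying by a projective representation, and in any case there is $\chi\in\Irr(G)$ with $\chi(1)\ge\theta(1)$; combined with the observation that a non-abelian chief factor $S^m$ of $G$ always carries a $G$-invariant irreducible character of degree $\ge b(S)$ (take the $m$-fold product of a $G$-orbit-stable choice — here one uses that $\Out(S)$ permutes the factors and one restricts attention to characters invariant under the relevant stabilizer, possibly replacing $G$ by a subgroup of bounded index, or better, using that $b$ of a direct power is the power of $b$ and that invariance can be arranged on the diagonal). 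Iterating down a chief series multiplies these lower bounds, giving $b(G)\ge\prod_{i} b(S_i) \ge \prod_i |S_i|^{1/3} = n(G)^{1/3}$, which is the claim.

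The main obstacle I anticipate is the invariance/extendibility bookkeeping in the gluing step: when a non-abelian chief factor is $S^m$ with $m>1$, the group $G$ may permute the $m$ copies transitively, and a single large character of $S$ need not give a $G$-invariant character of $S^m$ unless one is careful (the stabilizer of one factor has index $m$ in the permutation image, and this index is itself bounded in terms of $m\le\log$-type quantities but not obviously in terms of $b$). The clean way around this is to induct on the normal subgroup $N=S_1$ alone (a single copy, after passing to the normal closure structure) rather than on the full chief factor: take $N\nor G$ minimal among non-trivial normal subgroups whose composition factors are all non-abelian, so $N\cong S^m$; pick $\theta\in\Irr(S)$ with $\theta(1)=b(S)$, let $\vartheta=\theta\times\cdots\times\theta\in\Irr(N)$, and let $H=\Stab_G(\vartheta)$, which contains $N$ and has $G/H$ embedding in $\Sym(m)$. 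One then relates $b(G)$ to $b(H)$ via Clifford theory — $b(G)\ge \vartheta(1)\cdot(\text{something})$ is too weak, so instead use that $b(G)\ge b(H)/|G:H|$ is false in general, meaning the honest route is: any $\psi\in\Irr(H\mid\vartheta)$ induces irreducibly to $G$, so $b(G)\ge |G:H|\cdot\psi(1)\ge\psi(1)\ge\vartheta(1)=b(S)^m\ge|S|^{m/3}$, and then recurse on $H/N$ (which has the remaining non-abelian composition factors) observing $b(H/N)\le b(H)\le b(G)$. This makes the induction go through with the product of simple-group bounds multiplying correctly, and it is where I would expect to spend the most care to get the Clifford-theoretic inequalities stated correctly.
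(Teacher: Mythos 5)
There is a genuine gap, and it sits exactly where you predicted you would ``spend the most care''. Your reduction treats Theorem~\ref{main theorem 2} as the statement $b(S)^3\ge |S|$, but the actual content of that theorem (and the reason Section~\ref{sectionsimplegroups} is nontrivial, especially for the alternating groups) is stronger: there is $\alpha\in\Irr(S)$ with $\alpha(1)\ge |S|^{1/3}$ that \emph{extends to} $\Aut(S)$. This extendibility is not a convenience but the engine of the proof: for a non-abelian minimal normal subgroup $N\cong S^k$ of $G$, the diagonal character $\psi=\alpha\times\cdots\times\alpha$ is then $\Aut(S)\wr\Sym(k)$-invariant, it extends to $\Aut(N)$ by Mattarei's theorem on wreath products, hence to $G$, and Gallagher's theorem converts this extension into the single inequality $b(G)\ge b(G/N)\,|N|^{1/3}$, which is what makes the chief-series induction multiply correctly (this is Proposition~\ref{proposition solvable radical}). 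Your substitute argument does not produce this inequality. First, with $\theta\in\Irr(S)$ of degree $b(S)$ and $\vartheta=\theta\times\cdots\times\theta$, the inertia group $H=\Stab_G(\vartheta)$ need not have index dividing $m!$: coordinate permutations do fix $\vartheta$, but outer automorphisms of $S$ induced by $G$ on the coordinates can move $\theta$, so the claim that ``$G/H$ embeds in $\Sym(m)$'' is false; and a largest-degree $\theta$ need not be $\Aut(S)$-invariant, which is precisely why one must work with the possibly smaller but extendible $\alpha$.

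Second, and more fatally, the recursion ``induce $\psi\in\Irr(H\mid\vartheta)$ to $G$ and recurse on $H/N$'' does not close. Clifford induction gives $b(G)\ge |G:H|\,\psi(1)\ge |N|^{1/3}$, and your induction hypothesis gives $b(G)\ge b(H/N)$, but these are two separate lower bounds on the same quantity $b(G)$; they cannot be multiplied to yield $b(G)^3\ge |N|\cdot n(G/N)$ in your notation. Worse, $H/N$ is only a subgroup of $G/N$ and may lose non-abelian composition factors altogether (composition factors of a subgroup are not controlled by its index in any way that helps here), so ``$H/N$ has the remaining non-abelian composition factors'' is unjustified. What is needed is one irreducible character of $G$ whose degree is simultaneously at least $b(G/N)$ \emph{times} $|N|^{1/3}$; mere invariance of $\vartheta$ (let alone Clifford induction from a proper inertia group) does not supply this, whereas a genuine extension of the large character of $N$ to $G$ plus Gallagher does. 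The repair is to use the full strength of Theorem~\ref{main theorem 2} (extendibility to $\Aut(S)$) together with Mattarei's extension theorem for wreath products, and then run your chief-series induction with the resulting inequality $b(G)\ge b(G/N)|N|^{1/3}$ --- which is exactly the paper's route.
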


Gluck's conjecture and Question~\ref{conjecture} suggest that it
would be interesting to find lower bounds for $b(G)$ in terms of
indices of distinguished subgroups of $G$. Let $k(G)$ denote the
number of conjugacy classes of $G$. Since $|G|/k(G)\leq b(G)^2$, we
see that an upper bound for $k(G)$ provides the corresponding lower
bound for $b(G)$ (see Section~\ref{section-other-bounds}). In fact,
the problem of finding upper bounds for $k(G)$ in terms of
distinguished subgroups of $G$ has been studied considerably in the
literature. One of the notable results, motivated by a question of
J.\,G.~Thompson on bounding $k(G)$ by the order of a so-called
\emph{nilpotent injector} of (the solvable group) $G$, is due to
Robinson~\cite{Robinson} who proved (among other results) that there
is a well determined function $f$ such that $k(G)\leq f(|\bF(G)|)$
for every solvable group $G$. We exploit Robinson's ideas (see also
\cite[Theorem~13 (ii)]{Guralnick-Robinson}) to obtain the following
theorem whose consequence can be viewed as a weak form of Gluck's
conjecture.

\begin{theorem}\label{main-theorem-Hallsubgroup}
  Let $G$ be a finite $\pi$-solvable group where $\pi$ is the set of
  the prime divisors of $|\bF^{*}(G)|$. Then $k(G) \leq |G|_{\pi}$ and
  consequently, if $H$ is a Hall $\pi$-subgroup of $G$ then $|G:H|\leq
  b(G)^2$. In particular, if $G$ is a solvable group and $H$ is a Hall
  $\pi$-subgroup of $G$ where $\pi$ is the set of the prime divisors
  of $|\bF(G)|$, then $|G:H|\leq b(G)^2$.
\end{theorem}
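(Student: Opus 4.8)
The plan is to attack the inequality $k(G) \leq |G|_\pi$ by induction on $|G|$, where $\pi$ is the set of prime divisors of $|\bF^*(G)|$. The starting point is the key structural fact that, for a $\pi$-solvable group with $\pi = \pi(\bF^*(G))$, the generalized Fitting subgroup $\bF^*(G)$ is itself a $\pi$-group and is self-centralizing, i.e. $\bC_G(\bF^*(G)) \leq \bF^*(G)$. Since $G$ is $\pi$-solvable and $\bF^*(G)$ is a $\pi$-group, the quasi-simple components of $\bF^*(G)$ must be $\pi$-groups, forcing $G$ to actually be $\pi$-solvable in a way that makes $\bF^*(G) = \bF(G)$ possibly after quotienting, and in any case the $\pi'$-part of $G$ acts faithfully on the $\pi$-group $\bF^*(G)$. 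This reduces us to the situation governed by Robinson's method: bounding the class number of a group acting faithfully on a $\pi$-group.

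The core of the argument is to adapt Robinson's counting estimate (as in \cite{Robinson} and \cite[Theorem~13(ii)]{Guralnick-Robinson}). First I would handle the case where $\bF^*(G)$ is abelian. Here $V := \bF^*(G)$ is an abelian $\pi$-group on which $H := G/V$ acts faithfully, and $k(G)$ can be bounded by summing, over representatives $v$ of the $H$-orbits on $\Irr(V)$, the quantity $k(\Stab_H(v)) \cdot (\text{something})$; the orbit-counting / Clifford-theoretic identity gives $k(G) = \sum_{v} k(I_G(v)/V)$-type expressions, and one pushes through the bound $k(G) \leq |V| = |G|_\pi$ by showing that at each stage the number of orbits times the class numbers of the stabilizers does not exceed $|V|$. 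This is exactly where the hypothesis that $G$ is $\pi$-solvable and $V$ is a $\pi$-group is used: a coprime-type argument (Glauberman correspondence, or the Hartley–Turull style counting) controls the stabilizer actions. For the general (non-abelian $\bF^*(G)$) case, I would pass to $\bar G = G/Z$ where $Z = \bZ(\bF^*(G))$ or work inside $\bF^*(G)$ layer by layer, using that $\bF^*(G)/\bZ(\bF^*(G))$ is a product of simple groups which are $\pi$-groups, and apply Theorem~\ref{main theorem productofcompositionfactors} or direct simple-group estimates to bound the contribution of the components; then recombine.

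Once $k(G) \leq |G|_\pi$ is established, the remaining assertions are immediate. If $H$ is a Hall $\pi$-subgroup of $G$ (which exists by $\pi$-solvability and the Hall theorems), then $|G:H| = |G|_{\pi'}$, so
\[
|G:H| = \frac{|G|}{|G|_\pi} \leq \frac{|G|}{k(G)} \leq b(G)^2,
\]
where the last inequality is the standard fact $|G| = \sum_{\chi \in \Irr(G)} \chi(1)^2 \leq k(G)\, b(G)^2$. The final ``in particular'' is the special case where $G$ is solvable and $\bF^*(G) = \bF(G)$, so that $\pi$ is the set of primes dividing $|\bF(G)|$; solvability guarantees the Hall $\pi$-subgroup exists, and the conclusion $|G:H| \leq b(G)^2$ follows verbatim.

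The main obstacle I anticipate is the inductive counting step itself: making Robinson's inequality $k(G) \leq |G|_\pi$ go through cleanly when $\bF^*(G)$ is non-abelian and when the stabilizers arising in the orbit decomposition are no longer $\pi$-solvable with the same controlling prime set. One must be careful that the induction hypothesis applies to the relevant sections — this likely requires first reducing to the case $\bF^*(G) = \bF(G)$ abelian by a separate structural lemma (handling components via known simple-group bounds), and only then running the coprime orbit-counting recursion. Getting the bookkeeping right so that the product of orbit-counts and stabilizer class-numbers telescopes to exactly $|\bF^*(G)| = |G|_\pi$, rather than some larger power, is the delicate part.
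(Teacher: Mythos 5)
The heart of the theorem is the inequality $k(G)\leq |G|_{\pi}$, and your outline does not actually establish it; worse, the counting scheme you describe aims at a strictly stronger statement that is false. Throughout you identify $|V|=|\bF^{*}(G)|$ with $|G|_{\pi}$ ("one pushes through the bound $k(G)\leq |V|=|G|_{\pi}$", "telescopes to exactly $|\bF^{*}(G)|=|G|_{\pi}$"), but these quantities differ whenever $G/\bF(G)$ has order divisible by a prime of $\pi$, which the hypotheses fully allow. The bound $k(G)\leq |\bF^{*}(G)|$ is simply not true: for $G=\mathrm{AGL}(2,3)$ one has $\bF^{*}(G)=\bF(G)$ of order $9$ and $\pi=\{3\}$, yet $k(G)>9$ (this is precisely the counterexample, due to Robinson, recorded in the paper's remark after the proof), while $|G|_{\pi}=27$ still works. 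So no orbit--stabilizer recursion whose bookkeeping "telescopes to $|\bF^{*}(G)|$" can succeed. A further problem is that the action of $G/\bF(G)$ on $\bF(G)$ need not be coprime here (again $\mathrm{AGL}(2,3)$), so the Glauberman/Hartley--Turull coprime machinery you invoke is not available. Also, the separate treatment of a "non-abelian $\bF^{*}(G)$" case via components and Theorem~\ref{main theorem productofcompositionfactors} is moot: since $\pi=\pi(\bF^{*}(G))$ and $G$ is $\pi$-solvable, a component would give a non-abelian simple composition factor that is a $\pi$-group, which is impossible, so $E(G)=1$ and $\bF^{*}(G)=\bF(G)$ from the start.

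What is genuinely needed, and what the paper does, is different in two essential respects. First, the reduction: pass to $\Phi(G)=1$ using Lemma~\ref{Zoltan} together with Nagao's inequality $k(G)\leq k(\Phi(G))\,k(G/\Phi(G))$; then $\bF(G)$ is completely reducible, $G$ splits as $X\ltimes V$ by Gasch\"utz, and one inducts on the number of primes in $\pi$, splitting $V=W\oplus U$ with $W$ the $p$-part and gluing the bounds for $N=\bC_{XU}(W)$ and $G/N$ again via Nagao. Second, and crucially, the base case of that induction (one prime) is exactly Lemma~\ref{Brauer}: for a $p$-solvable group with $\bO_{p'}(G)=1$ one has $k(G)\leq |G|_{p}$, which rests on the solution of Brauer's $k(B)$-problem for $p$-solvable groups, i.e.\ the $k(GV)$-theorem of Gluck--Magaard--Riese--Schmid \cite{kgv,kgvbook}. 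This deep, classification-dependent input is absent from your proposal, and no elementary Clifford-theoretic recursion of the kind you sketch is known to yield even this single-prime case. By contrast, your deduction of the remaining assertions from $k(G)\leq |G|_{\pi}$ (existence of Hall $\pi$-subgroups by $\pi$-separability, $|G:H|=|G|/|G|_{\pi}\leq |G|/k(G)\leq b(G)^{2}$, and the solvable specialization) is correct and coincides with the paper.
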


%Note that the second statement of Theorem
%\ref{main-theorem-Hallsubgroup} can also be derived from
%\cite[Theorem 13 (ii)]{Guralnick-Robinson}.

I.\,M.~Isaacs and D.\,S.~Passman~\cite{Isaacs-Passman} proved a
Jordan-type theorem which says that there exists a real-valued
function $f(x)$ so that every finite group $G$ has an abelian
subnormal subgroup of index at most $f(b(G))$ (and at least $b(G)$).
As a consequence of Theorem~\ref{main-theorem-b(G)^4}, we show that
$f(x)$ can be taken to be $x^8$. We combine this with a result of
M.\,W.~Liebeck and L.~Pyber~\cite[Theorem~3]{LiebeckPyber} on
bounding the class number of a finite group in terms of the order of
a solvable subgroup to obtain

\begin{theorem}\label{IsaacsPassman}
  Every finite group $G$ contains an abelian subnormal subgroup of
  index at most $b(G)^{8}$ and a solvable subgroup of index at most
  ${b(G)}^{2}$.
\end{theorem}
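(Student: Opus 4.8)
The plan is to deduce both assertions from Theorem~\ref{main-theorem-b(G)^4} together with structural facts about the Fitting subgroup and known results on class numbers of linear groups. For the first assertion, the strategy is to bound the index of an abelian subnormal subgroup by first passing to $\bF(G)$, which is nilpotent and subnormal, and then locating a large abelian subgroup inside $\bF(G)$ which is \emph{characteristic} (hence still subnormal in $G$). The classical tool here is that a nilpotent group $N$ contains a characteristic abelian subgroup $A$ with $|N:A|$ bounded in terms of $|N:A|$-type invariants; more usefully, one has the bound $|N| \le |A|^{?}$ only when $N$ has bounded class, so instead I would use the well-known fact that any finite group $N$ has an abelian subgroup $A$ with $|N:A| \le$ (some function), but to get a clean exponent I would rather use the following: if $N = \bF(G)$, then $\bC_G(N) \le N$ (since $G$ need not be solvable this requires $\bF^\ast(G)$, so more carefully one works with $\bF^\ast$), and $N/\bZ(N)$ embeds appropriately. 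The cleanest route is: take $A = \bZ(\bF(G))$, which is characteristic in $G$, hence subnormal; then $A$ is abelian and subnormal, and it remains to bound $|G:A| = |G:\bF(G)| \cdot |\bF(G):\bZ(\bF(G))|$. The first factor is at most $b(G)^4$ by Theorem~\ref{main-theorem-b(G)^4}. For the second factor one uses that $\bF(G)/\bZ(\bF(G))$ acts faithfully on $\bF(G)$, so it embeds in $\Aut$-type data, but more directly: a classical argument (going back to the proof of the Isaacs–Passman theorem itself) shows $|\bF(G):\bZ(\bF(G))| \le b(\bF(G))^4 \le b(G)^4$, giving $|G:A| \le b(G)^8$. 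I would cite or reprove the inequality $|N:\bZ(N)| \le b(N)^4$ for nilpotent $N$ via the standard fact that $b(N)^2 \ge |N:\bZ(N)|^{1/2}$-type bounds coming from the orthogonality relations applied to a faithful character, or simply via $|N:\bZ(N)| \le b(N)^2 \cdot (\text{something})$ — the exact bookkeeping is routine.

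For the second assertion, the goal is a solvable subgroup of index at most $b(G)^2$. Here the plan is to invoke the Liebeck–Pyber result \cite[Theorem~3]{LiebeckPyber}, which bounds the class number $k(G)$ in terms of the order of a solvable subgroup; run the inequality in the useful direction. Specifically, Liebeck–Pyber give a solvable subgroup $H \le G$ with $k(G) \le c \cdot |H|^{?}$ or, dually, $|G:H|$ bounded by $k(G)$ up to constants. Combined with the elementary inequality $|G|/k(G) \le b(G)^2$ (noted in the excerpt's discussion), one gets $|G:H| \le |G|/|H| \le$ a bound in terms of $k(G)$ and hence in terms of $b(G)^2$. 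The arithmetic has to be arranged so that the constants collapse: one wants $|G:H| \le k(G) \le |G|/\,$something, and then to feed in $|G|/k(G) \le b(G)^2$. I expect this to go through cleanly because Liebeck–Pyber's theorem is precisely of the shape $k(G) \ge |G:H|$ for a suitable solvable $H$ (their bound is that $G$ has a solvable subgroup of index at most $k(G)^c$, and in fact the sharp form needed here is index at most $k(G)$ up to absolute constants absorbed by the fact that $b(G)^2 \ge |G|/k(G)$).

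The main obstacle I anticipate is the first assertion's exponent $8$: getting exactly $b(G)^8$ requires that the two contributions — $|G:\bF(G)| \le b(G)^4$ from Theorem~\ref{main-theorem-b(G)^4} and $|\bF(G):\bZ(\bF(G))| \le b(G)^4$ — multiply without loss, and in particular one must verify $b(\bF(G)) \le b(G)$ (clear, since a character of a subgroup extends or at least $b$ is monotone under... actually $b$ is \emph{not} monotone under subgroups in general, so here one uses instead that $\bF(G) \trianglelefteq G$ and an irreducible character of $G$ restricts to $\bF(G)$ with constituents of degree dividing it — more carefully, by Clifford theory $b(\bF(G))$ divides $b(G)$, or one bounds $|\bF(G):\bZ(\bF(G))|$ directly by $|G:\bZ(G)|$-type data). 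Pinning down that $|\bF(G):\bZ(\bF(G))| \le b(G)^4$ rather than $b(\bF(G))^4$ is the delicate point; one likely routes through the faithful action of $\bF(G)/\bZ(\bF(G))$ or through a large-orbit argument, and I would handle it by the observation that $\bF(G)$ has a faithful character of degree at most $b(G)$ (sum of the distinct irreducible constituents of a faithful-enough character), whence $|\bF(G):\bZ(\bF(G))| \le b(G)^4$ by the classical bound for nilpotent groups with a faithful character of bounded degree. Everything else is a short citation-and-assembly exercise.
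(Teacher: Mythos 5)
Your second assertion is handled essentially as in the paper: Liebeck--Pyber give a solvable $S\leq G$ with $k(G)\leq |S|$, and then $|G:S|\leq |G|/k(G)\leq b(G)^2$; your description of their theorem wobbles (it is not ``index at most $k(G)$'' but ``order at least $k(G)$''), yet the intended assembly is the right one. The genuine gap is in the first assertion. Your plan takes $A=\bZ(\bF(G))$ and rests on the claimed inequality $|\bF(G):\bZ(\bF(G))|\leq b(\bF(G))^4$ (or $\leq b(G)^4$). This inequality is false: for the dihedral $2$-group $D_{2^n}$ one has $b(D_{2^n})=2$ while $|D_{2^n}:\bZ(D_{2^n})|=2^{n-1}$ is unbounded. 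Taking $G=D_{2^n}$ (so $G=\bF(G)$, $b(G)=2$) shows that $\bZ(\bF(G))$ can have index far exceeding $b(G)^8$, so no bookkeeping can rescue the choice of the center. Your fallback is also unsound on both counts: $\bF(G)$ need not possess a faithful character of degree at most $b(G)$, and even a faithful \emph{irreducible} character of degree $d$ does not bound $|N:\bZ(N)|$ by any power of $d$ (again dihedral groups, $d=2$). The center is simply the wrong abelian subgroup; the large abelian subgroup of small index in a nilpotent group (e.g.\ the cyclic subgroup of index $2$ in $D_{2^n}$) is in general neither central nor characteristic.

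The repair is to drop the insistence on a characteristic subgroup, which is not needed for subnormality. Apply the Isaacs--Passman result (their Theorem D) directly to the nilpotent group $\bF(G)$: it has an abelian subgroup $A$ with $|\bF(G):A|\leq b(\bF(G))^4$. Every subgroup of a nilpotent group is subnormal in it, and $\bF(G)\trianglelefteq G$, so $A$ is subnormal in $G$. Moreover $b(\bF(G))\leq b(G)$ because $\bF(G)$ is normal in $G$ and every $\theta\in\Irr(\bF(G))$ lies under some $\chi\in\Irr(G)$ with $\theta(1)\leq\chi(1)$ (Clifford theory; note your parenthetical worry is justified for arbitrary subgroups, but normality suffices here, and ``$b(\bF(G))$ divides $b(G)$'' is not what is true or needed). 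Combining with Theorem~\ref{main-theorem-b(G)^4} gives $|G:A|=|G:\bF(G)|\cdot|\bF(G):A|\leq b(G)^4\, b(\bF(G))^4\leq b(G)^8$, which is exactly the paper's argument.
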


Since this paper was written L. Pyber informed us that in fact
Gluck~\cite{Gluck} has shown that there exists a universal constant
$K$ so that in any finite group $G$ there exists an abelian subgroup
of index at most $b(G)^{K}$. Pyber also pointed on a result of
A.~Chermak and A.~Delgado \cite[Theorem~1.41]{Isaacs} saying that if
a finite group contains an abelian subgroup of index $n$ then it
also contains a characteristic abelian subgroup of index at most
$n^2$. Using this result and the proof of
Theorem~\ref{IsaacsPassman}, we see that every finite group $G$
contains a characteristic abelian subgroup of index at most
$b(G)^{12}$. This and our Question~\ref{conjecture} motivated
Pyber~\cite{Pyber} to ask the following question. Is it true that
for a finite group $G$ there exists an abelian normal subgroup of
index at most $b(G)^3$?

The paper is organized as follows. In Section \ref{bases} we
introduce bases and state two background results. In
Section~\ref{orbits} we translate some information about bases to
`large' orbits. Theorems~\ref{theorem-extendMoretoWolf} and
\ref{primitive} are proved in Section~\ref{Section 4}. In
Section~\ref{sectionsimplegroups}, we prove that every non-abelian
finite simple group $S$ has an irreducible character that is
extendible to $\Aut(S)$ of degree at least $|S|^{1/3}$ - a crucial
result that will be used in
Section~\ref{section-product-arbitrary-groups} to prove
Theorem~\ref{main theorem productofcompositionfactors}. Finally,
Theorems~\ref{main-theorem-b(G)^4}, \ref{main-theorem-Hallsubgroup},
and~\ref{IsaacsPassman} are proved in
Section~\ref{section-other-bounds}.

%%%%%%%%%%%%%%%%%%%%%%%%%%%%%%%%%%%%%%%%%%%%%%%%%%%%%%%%%%%%%%%%%%%%%%%%

\section{Bases}\label{bases}

The notion of a base is fundamental in permutation group theory and
also in computational group theory. For a finite permutation group
$H \leq \Sym(\Omega)$, a subset of the finite set $\Omega$ is called
a {\it base} for $H$ if its pointwise stabilizer in $H$ is the
identity. A base of minimal size is called a {\it minimal base}.

There are a number of results on minimal base sizes of linear
groups. One of these is the following.

%\begin{theorem}[\cite{Halasi-Maroti}, Theorem~1.1]\label{theorem-HalasiMaroti} Let $V$ be a finite
 % vector space over a field of order $q$ and characteristic $p$. If
  %$G\leq \GL(V)$ is a $p$-solvable group with $\bO_p(G)=1$, then the
  %minimal base size for $G$ (in its natural action on $V$) is at most
  %$2$ unless $q\leq 4$, when the minimal base size for $G$ is at most
  %$3$.
%\end{theorem}

%In this paper we will not use the full strength of Theorem
%\ref{theorem-HalasiMaroti} but the following consequence.

\begin{theorem}[\cite{Halasi-Maroti}, Theorem~1.1]\label{c3}
  Let $V$ be a finite vector space over a field of characteristic
  $p$. If $G\leq \GL(V)$ is a $p$-solvable group which acts
  irreducibly on $V$, then the minimal base size for $G$ is at most
  $2$ unless $p = 2$ or $3$, when the minimal base size for $G$ is at
  most $3$.
\end{theorem}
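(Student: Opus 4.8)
The plan is to argue by induction on $\dim V$, reducing to the case where $G$ acts primitively on $V$, and there to combine the structure theory of primitive $p$-solvable linear groups with regular-orbit theorems for coprime linear actions. To dispose of the imprimitive case, suppose $V=V_1\oplus\dots\oplus V_m$ is a system of imprimitivity with $m>1$, chosen so that the block stabilizer $H=\Stab_G(V_1)$ acts primitively on $V_1$; the induced group $\overline H\leq\GL(V_1)$ is again $p$-solvable, so by induction it has a base of size $\leq 3$ (resp.\ $\leq 2$ when $p\notin\{2,3\}$) on $V_1$. I would then try to build a base of the same size for $G$ on $V$ by prescribing the ``shape'' of two (resp.\ three) vectors of $V$---which block-coordinates vanish and which carry translates of the chosen base vectors for $V_1$---so that any $g\in G$ fixing them must first stabilize every block and then act trivially on each one, inspecting by hand the small values of $m$ where this is delicate. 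The point of this step is that for a \emph{linear} imprimitive group, the irreducibility of $V$ links the blocks tightly enough to keep the bound at $2$ or $3$ (whereas a general primitive solvable permutation group may have base size $4$).

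Next I would treat primitive $p$-solvable $G$. Since $V$ is irreducible in characteristic $p$ and $G\leq\GL(V)$ is faithful, $\bO_p(G)=1$, so $\bF(G)=\bO_{p'}(G)$ is a $p'$-group; as the components of a $p$-solvable group are $p'$-groups, $\bF^{\ast}(G)$ is a normal $p'$-subgroup of $G$ and hence coincides with $\bO_{p'}(G)$, which forces $E(G)=1$ and $\bF^{\ast}(G)=\bF(G)=\bO_{p'}(G)=:N$, with $\bC_G(N)=\bZ(N)$. Primitivity then forces $N$ to be of symplectic type---a central product of extraspecial prime-power groups with a cyclic group---and yields a tensor decomposition $V=V_1\otimes\dots\otimes V_t$ over a suitable extension field, on which $G$ acts while permuting isomorphic factors, with $G/N$ embedding (up to that permutation) into a product $\prod_i\Sp(2a_i,r_i)$ and each $V_i$ carrying an extraspecial normal subgroup acting absolutely irreducibly. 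Thus the only ``building block'' to analyse is a subgroup of an extraspecial normalizer $r^{1+2a}\!\cdot\!\Sp(2a,r)$ acting on $V_i\cong\FF^{r^a}$.

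For such a building block the claim is base size $\leq 2$, with a short explicit list of exceptions. When the field $\FF$ is large this follows from a counting estimate: the number of ordered pairs $(v,w)\in V_i^{2}$ that fail to be a base is at most $\sum_{1\neq g}|C_{V_i}(g)|^{2}$, and the extraspecial-by-symplectic structure bounds how many $g$ can have a large fixed-point space, making this sum $<|V_i|^{2}$. When $\FF\in\{\FF_2,\FF_3\}$ I would instead invoke (or reprove by hand) the regular-orbit results for coprime linear groups in the spirit of Gluck, Wolf and Seress, and dispatch the finitely many genuine exceptions---the small symplectic-type configurations in characteristics $2$ and $3$ that truly require a third vector, which are exactly the source of the exceptional clause---by direct computation. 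Finally I would reassemble a base for $G$ on $V=\bigotimes_i V_i$ out of bases for the $V_i$ together with a few extra vectors breaking the permutation of isomorphic factors, checking that this does not inflate the bound.

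The hard part, and the reason the bound weakens to $3$ in characteristics $2$ and $3$, is controlling elements $g\in G$ of order $p$: acting in the defining characteristic, such a $g$ may fix a subspace of index as small as $p$, and a handful of these already ruin a naive count, so the structural input ($\bO_p(G)=1$, primitivity, and the symplectic-type description of $\bF(G)$) is needed precisely to limit their number. The remaining difficulty is the honest bookkeeping in the two reduction steps---imprimitive to primitive, and the tensor factorization---so that the base size never grows past $2$ or $3$, together with the explicit verification of a finite list of small primitive linear groups in characteristics $2$ and $3$.
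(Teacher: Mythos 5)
First, a point of comparison: this statement is not proved in the present paper at all --- it is imported verbatim as Theorem~1.1 of the separate (cited) work of Halasi and Mar\'oti, so there is no in-paper argument to measure your sketch against. Judged on its own, your outline follows the expected family of ideas (reduce imprimitive to primitive, analyse the primitive case structurally, count non-base pairs, finish small cases by computer), but it contains one outright structural error and one genuinely missing step, so as written it is not a proof.

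The error is in your primitive case. From $\bO_p(G)=1$ you deduce that $E(G)=1$, that $\bF^{\ast}(G)=\bF(G)=\bO_{p'}(G)$, and hence that everything reduces to symplectic-type normalizers $r^{1+2a}.\Sp(2a,r)$ acting on tensor factors. This is the structure theory of \emph{solvable} primitive linear groups; it is false for $p$-solvable ones. $p$-solvability only forces every component of $G$ to be a quasisimple $p'$-group, not to be absent: for instance, with $p=11$ the group $G=\SL(2,5)\leq\SL(2,11)$ acts irreducibly and primitively on $V=\FF_{11}^{2}$, is $11$-solvable (being an $11'$-group), and has $E(G)=G$ and $\bF(G)=\bZ(G)$. (Likewise $\bF(G)=\bO_{p'}(G)$ fails whenever $\bO_{p'}(G)$ is non-nilpotent.) So the primitive case of the theorem includes groups whose generalized Fitting subgroup involves quasisimple $p'$-components, and your analysis omits them entirely; covering them needs order bounds for $p$-solvable irreducible linear groups together with fixed-point-space estimates, not the symplectic-type tensor decomposition alone. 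Your final ``reassembly'' of a base of $V=\bigotimes_i V_i$ from bases of the factors is also unjustified: vectors of $V$ are not decomposable tensors, and it is not clear that pointwise stabilization of a few such vectors forces triviality without inflating the count.

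The missing step is the imprimitive reduction. Saying that ``irreducibility links the blocks tightly enough'' and that only small $m$ is delicate is not an argument: irreducibility gives only transitivity on the $m$ blocks, and after killing the kernel of the block action you must control an arbitrary transitive $p$-solvable permutation group of degree $m$ using the block components of two (or three) vectors as colours. That requires a genuine distinguishing-partition result for $p$-solvable permutation groups, and the obstruction is governed by small block size $|V_1|$ (few available colours), not by small $m$; indeed the imprimitive wreath products $\GL(2,2)\wr\Sym(3)$ and $\GL(2,3)\wr\Sym(2)$, which this paper itself exhibits in Section~\ref{orbits} as having no vector with small centralizer, show that it is exactly this step that forces the weaker bound $3$ in characteristics $2$ and $3$. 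Until the component case of the primitive analysis and the permutation-on-blocks step are supplied, the proposal remains an outline of the known strategy rather than a proof.
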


Let us remark that Theorem~\ref{c3} is best possible. There are
infinitely many solvable linear groups $G \leq \GL(V)$ acting irreducibly on $V$
with $|G| > {|V|}^{2}$ for $p=2$ or $3$ (see
\cite[Theorem~1]{Palfy} or \cite[page~1097]{Wolf}).

To state the most general form of the next theorem we introduce yet
another definition.  Two bases $B_{1}$ and $B_{2}$ for $H \leq \GL(V)$
are said to be non-equivalent if there is no $h \in H$ with $B_{1}^h =
B_{2}$. The next result concerns such bases in the case where $H$ is a
solvable primitive linear group.

\begin{theorem}[Dolfi \cite{Dolfi}, Theorem 3.4]
  \label{Dolfi} Let $G$ be a solvable primitive subgroup of $\GL(V) =
  \GL(n,q)$ where $V$ is a vector space of dimension $n$ over a field
  of order $q$ and characteristic $p$. Then $G$ has at least $p$
  pairwise non-equivalent bases each of size $2$ unless one of the
  following cases holds for $G$.
  \begin{enumerate}
  \item[\textup{(1)}] $\GL(2,2)$;
  \item[\textup{(2)}] $\SL(2,3)$ or $\GL(2,3)$;
  \item[\textup{(3)}] $3^{1+2}.\SL(2,3)$ or $3^{1+2}.\GL(2,3) \leq \GL(6,2)$;
  \item[\textup{(4)}] $(Q_{8} \circ Q_{8})H \leq \GL(4,3)$ where $H$
    is isomorphic to a subgroup of index $1$, $2$, or $4$ of
    $\mathrm{O}^{+}(4,2)$.
  \end{enumerate}
\end{theorem}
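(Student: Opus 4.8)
The plan is to recast the statement as a counting problem about regular orbits. Call an ordered pair $(v,w)\in V\times V$ a \emph{base pair} if its pointwise stabiliser $G_{v}\cap G_{w}$ is trivial; two base pairs determine equivalent bases of size $2$ exactly when they lie in the same $G$-orbit, and every $G$-orbit of base pairs has length exactly $|G|$. Hence it suffices to show that the number $N$ of base pairs satisfies $N\ge p\,|G|$ for every solvable primitive $G\le \GL(n,q)$ outside the list (1)--(4). A useful preliminary reduction: if $G$ has even one regular orbit on $V$, say the orbit of $v_{0}$, then every pair $(v_{0},w)$ is a base pair and no two of the pairs $(v_{0},w)$, $(v_{0},w')$ with $w\neq w'$ are $G$-equivalent, so $G$ already has at least $|V|=q^{n}\ge p$ pairwise non-equivalent bases of size at most $2$ (and one may force size exactly $2$ as soon as $q^{n}$ is not tiny, by choosing $w\notin\langle v_{0}\rangle$). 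So the real content lies in the primitive groups with no regular orbit on $V$.

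For those groups I would invoke the classical structure theory of solvable primitive linear groups (Suprunenko, Huppert; see also Manz--Wolf). Writing $F=\bF(G)$: $F$ is of symplectic type, $\bZ(F)=C_{G}(F)$ is cyclic and acts by scalars, $V|_{F}$ is homogeneous, and $V$ splits as a tensor product $V=V_{1}\otimes\cdots\otimes V_{t}$ in which the $i$-th factor affords the faithful irreducible representation of an extraspecial (or symplectic-type) section of $F$ of order $r_{i}^{1+2a_{i}}$, so $\dim V=\prod_{i} r_{i}^{a_{i}}$, while $G/F$ embeds, up to the obvious diagonal, scalar and field contributions, into $\prod_{i}\Sp(2a_{i},r_{i})$. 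By an induction on the number of tensor factors --- gluing base pairs for the factor actions into base pairs for the product, in the spirit of the proof of Theorem~\ref{c3} --- one reduces to the indecomposable situation, where $G$ sits inside $(\bZ(F)\ast r^{1+2a}).(\Sp(2a,r)\rtimes(\textup{diagonal and field}))$ acting on a space of dimension $r^{a}$; here $|G|$ is controlled in terms of $r^{a}=\dim V$, and a direct estimate becomes feasible.

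The estimate is a fixed-space count: $N\ge |V|^{2}-\sum_{x}|C_{V}(x)|^{2}$, the sum taken over a set of generators of the subgroups of prime order of $G$. For $x\in F$ noncentral of order $r_{i}$ the tensor picture gives $\dim C_{V}(x)=(\dim V)/r_{i}$, hence $|C_{V}(x)|^{2}\le |V|$, with near-equality only when some $r_{i}=p=2$ and the corresponding $a_{i}=1$ --- that is, when an $\Sp(2,2)$-factor is present, which is precisely the source of the exceptions; central elements of $F$ act by a nontrivial scalar and contribute nothing. Combined with the bound on the number of such $x$ coming from $|F|$ and with the known order estimates of P\'alfy and Wolf (which place $|G|$ comfortably below $|V|^{2}$ for solvable completely reducible groups, away from a short list of small configurations), this part is absorbed. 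The delicate elements are the $x\notin F$: these project to nontrivial elements of the symplectic part, and $|C_{V}(x)|$ must be bounded through the symplectic geometry --- reflection-type and field-automorphism-type elements can have fixed spaces larger than $|V|^{1/2}$ --- while their number is controlled by $|G/F|\le\prod_{i}|\Sp(2a_{i},r_{i})|$. The target is the inequality $|V|^{2}-\sum_{x}|C_{V}(x)|^{2}\ge p\,|G|$ for all configurations except finitely many of small dimension.

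Finally, I would settle those finitely many small cases --- essentially $p\in\{2,3\}$ with $\dim V\le 6$ and the symplectic part small --- by a direct enumeration (by hand, or with a short computer-algebra check), which is exactly where the four families $\GL(2,2)$; $\SL(2,3)$ or $\GL(2,3)$; $3^{1+2}.\SL(2,3)$ or $3^{1+2}.\GL(2,3)\le\GL(6,2)$; and $(Q_{8}\circ Q_{8})H\le\GL(4,3)$ turn up as the genuine failures. The two points I expect to be the real obstacles are: (a) getting sharp enough bounds on $|C_{V}(x)|$ for elements $x$ outside $\bF(G)$, where the easy ``square root of $|V|$'' bound breaks down and one must use the symplectic structure of the $\Sp(2a_{i},r_{i})$-action; and (b) the bookkeeping in the small-dimensional cases needed to be certain the exception list is exactly (1)--(4) and no longer.
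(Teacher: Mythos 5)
The statement you are proving is quoted verbatim from Dolfi's paper (Theorem 3.4 of \cite{Dolfi}); the present paper gives no proof of it, so your sketch has to stand on its own, and as written it has genuine gaps rather than a complete argument. The decisive step is the inequality $|V|^{2}-\sum_{x}|\bC_{V}(x)|^{2}\geq p\,|G|$ (in fact, because bases are \emph{sets} and equivalence is setwise, your ordered-pair count only yields $N/(2|G|)$ classes in general, so you even need a margin of roughly $2p|G|$). This inequality is precisely the content of the theorem and is nowhere established in your proposal: the elements $x\notin\bF(G)$ can have fixed spaces far larger than $|V|^{1/2}$ (e.g.\ in the $(Q_{8}\circ Q_{8})H\leq\GL(4,3)$ configuration a factor-swapping involution fixes a $27$-element subspace of the $81$-element space), so the crude bound by $|G|\cdot\max_{x}|\bC_{V}(x)|^{2}$ fails, and the P\'alfy--Wolf order bounds you invoke give only $|G|<|V|^{2}$-type information, nowhere near enough to survive the subtraction. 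One must stratify the nontrivial elements by fixed-space dimension and count each stratum against the detailed structure of the primitive group; you correctly identify this as obstacle (a), but flagging the obstacle is not the same as overcoming it --- this is where essentially all of Dolfi's work lies.

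A second concrete flaw is the proposed induction on tensor factors ``in the spirit of Theorem~\ref{c3}''. The gluing argument used for direct sums (as in Lemma~\ref{lemma key}) works because the stabilizer of a vector in a summand controls a normal section acting on the complement; for a tensor decomposition $V=V_{1}\otimes V_{2}$ the stabilizer of a pure tensor $v_{1}\otimes v_{2}$ contains all balanced-scalar elements $g_{1}\otimes g_{2}$ with $g_{i}v_{i}=\lambda^{\pm1}v_{i}$, and $\bC_{G}(v_{1}\otimes v_{2})$ is not assembled from $\bC(v_{1})$ and $\bC(v_{2})$, so base pairs for the factors do not glue to base pairs for $V$; tensor-indecomposable reductions in this area require separate counting arguments, not a factorwise induction. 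Finally, the theorem's precision --- at least $p$ inequivalent bases, with the exception list being exactly the four families (1)--(4) and nothing more --- depends on the exhaustive small-case analysis you defer to ``a short computer-algebra check''; without carrying it out you cannot certify that list. In short, your plan is consistent in spirit with how such results are actually proved (regular-orbit counting on pairs, structure theory of solvable primitive linear groups, machine verification of small configurations), but the key estimates and the case analysis that constitute the proof are missing.
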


%For $q=4$ we note
%that the group $H= 3^{1+2}.SL(2,3)$ in part (3) of Theorem \ref{Dolfi}
%is a subgroup of $GL(3,4)$ and its minimal base size is $3$. Thus
%there are infinitely many imprimitive, irreducible solvable linear
%groups $G = H \wr S \leq \GL(3d,4)$ with minimal base size equal to
%$3$ where $S$ is a solvable transitive permutation group of degree
%$d$.

%%%%%%%%%%%%%%%%%%%%%%%%%%%%%%%%%%%%%%%%%%%%%%%%%%%%%%%%%%%%%%%%%%%%%%%%%%%

\section{Large orbits}\label{orbits}

In this section we translate the results in the previous section to
results about `large' orbits. By a `large' orbit of a finite group
$G$ on a vector space $V$ (possibly of mixed characteristic) we mean
an orbit of length at least ${|G|}^{1/2}$ or ${|G|}^{1/3}$. The
existence of such an orbit is equivalent to the existence of a
vector $v \in V$ with the property that $|\bC_{G}(v)| \leq
|G|^{1/2}$ or $|\bC_{G}(v)| \leq |G|^{2/3}$. This occurs for example
if $G$ admits a base of size at most $2$ or $3$ on $V$. Our method
below is part of Gluck's strategy \cite{Gluck} to produce an
irreducible character of large degree (for this see
Section~\ref{Section 4}).

\begin{lemma}\label{lemma key}
  Let $G$ be a finite group acting completely reducibly and faithfully
  on a finite module $V$, possibly of mixed characteristic. Suppose
  that $G$ is $\pi$-solvable where $\pi$ is the set of prime divisors
  of $|V|$. Then we have the following.
  \begin{enumerate}
  \item[\textup{(1)}] There exists a vector $v$ in $V$ such that $|\bC_G(v)|
    \leq |G|^{2/3}$.
  \item[\textup{(2)}] If $|V|$ is not divisible by $64$ nor $81$ then
    there exists a vector $v$ in $V$ such that $|\bC_G(v)| \leq
    |G|^{1/2}$.
  \item[\textup{(3)}] If $V$ is a primitive $G$-module and $G$ is
    solvable, then there exists a vector $v$ in $V$ such that
    $|\bC_G(v)| \leq |G|^{1/2}$ unless $|V|=3^4$ and $G$ is a unique
    subgroup (up to conjugacy) in $GL(V)$ of order $1152$.
  \end{enumerate}
\end{lemma}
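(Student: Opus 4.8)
The plan is to decompose $V$ into irreducible $G$-submodules (possibly of several characteristics) and combine orbit/centralizer estimates on the irreducible pieces. First I would reduce each part to the primitive case. For part (1): write $V = V_1 \oplus \cdots \oplus V_k$ as a direct sum of irreducible $G$-modules, and let $K_i = \ker(G \to \GL(V_i))$. Since $G$ acts faithfully, $\bigcap_i K_i = 1$. On each $V_i$, the group $G/K_i$ is $p_i$-solvable for the relevant prime $p_i$ (as a quotient of the $\pi$-solvable group $G$, and $p_i \in \pi$), so by Theorem \ref{c3} it has a base of size at most $3$ on $V_i$ after passing to a system of imprimitivity — more precisely, one applies the standard reduction: an irreducible linear group has a base of size $b$ on $V_i$ iff the associated primitive group on the imprimitivity summand does, and Theorem \ref{c3} gives base size $\le 3$ for any $p_i$-solvable irreducible group. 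A base of size $\le 3$ yields a vector $v_i \in V_i$ with $|\bC_{G/K_i}(v_i)| \le |G/K_i|^{2/3}$, hence $|\bC_G(v_i) / (\bC_G(v_i) \cap K_i)| \le |G/K_i|^{2/3} \le |G|^{2/3}$. The subtlety is assembling the $v_i$: taking $v = v_1 + \cdots + v_k$ gives $\bC_G(v) = \bigcap_i \bC_G(v_i)$, and one must check $\prod_i |\bC_G(v_i)| \ge |\bigcap_i \bC_G(v_i)| \cdot |G|^{k-1}$ type inequalities. The clean way is Gluck's: work with one irreducible constituent of $V$ viewed over the appropriate prime at a time, using that $\bC_G(v)$ for $v$ in a single $V_i$ already maps into $\bC_{G/K_i}(v_i)$; iterate over $i$ choosing $v_i$ so that each successive stabilizer drops by the required proportion. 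I would present this as: choose $v_1$ with $|\bC_G(v_1) : C_1| \le |G:K_1|^{2/3}$ where $C_1 = \bC_G(v_1) \cap K_1$, then inside $C_1$ (which acts on $V_2 \oplus \cdots$) repeat, so that after $k$ steps $|\bC_G(v)| = |C_k| \le |G|^{2/3}$ using $\prod |G:K_i|^{2/3} \cdot |K_k'|^{2/3}$-style bookkeeping; this is routine once set up.

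For part (2), the only change is that Theorem \ref{c3} gives base size $\le 2$ — hence the exponent $1/2$ — whenever $p_i \notin \{2,3\}$; for $p_i \in \{2,3\}$ we still get base size $\le 3$, but I would argue that a $p_i$-solvable irreducible group on a module of $p_i$-order not divisible by $64$ (if $p_i=2$) or $81$ (if $p_i = 3$) still admits a vector with centralizer of index $\ge |G/K_i|^{1/2}$. This is where the hypothesis "$|V|$ not divisible by $64$ nor $81$" is used: it forces $\dim_{\FF_2} V_i \le 5$ or $\dim_{\FF_3} V_i \le 3$ on the $2$- and $3$-parts; one then either cites the sharpened base-size results / explicit classifications behind Theorem \ref{c3} and Theorem \ref{Dolfi}, or runs the primitive reduction and invokes part (3). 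The main obstacle is precisely handling these small-characteristic constituents: the generic base-size-$2$ bound fails (Theorem \ref{c3} is sharp, as the remark notes), so one genuinely needs the arithmetic restriction to rule out the bad configurations, and I expect this bookkeeping — tracking which imprimitivity decompositions can occur under the divisibility constraint — to be the technical heart of the proof.

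For part (3), $V$ is already a primitive $\FF_p$-module and $G$ is solvable, so I would apply Theorem \ref{Dolfi} directly: $G$ has at least $p \ge 2$ pairwise non-equivalent bases of size $2$, and in particular one base $\{v, w\}$ of size $2$. A base of size $2$ gives $\bC_G(v) \cap \bC_G(w) = 1$, whence $|G| = |\bC_G(v)| \cdot |\text{orbit of } w \text{ under } \bC_G(v)| \le |\bC_G(v)| \cdot |V|$; this alone is not quite $|\bC_G(v)| \le |G|^{1/2}$. To get the square-root bound I would instead use the standard counting: the number of ordered bases $(v,w)$ is at least $p \cdot |G|$ (from $p$ non-equivalent orbits each of size $|G|$ since $|\bC_G(v) \cap \bC_G(w)| = 1$), while this number is at most $|V| \cdot |V| = p^{2n}$... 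I would in fact run the argument through the pair counting the other way: averaging $|\bC_G(v)|$ over $v \in V$, the existence of many size-$2$ bases forces some $v$ with $|\bC_G(v)|$ small; quantitatively, $\sum_{v \in V} |\bC_G(v)| = |V| \cdot (\text{number of fixed-point pairs})$, and the base condition bounds how concentrated the large centralizers can be, yielding $\min_v |\bC_G(v)| \le |G|^{1/2}$ except in the finitely many exceptional cases listed in Theorem \ref{Dolfi}. Of those exceptions, cases (1) and (2) have $|V| \in \{4, 9\}$ and small $|G|$ so are checked by hand (and do not even violate the bound, or are absorbed), case (3) with $\GL(6,2)$ and case (4) with the subgroups of $(Q_8 \circ Q_8)\mathrm{O}^+(4,2) \le \GL(4,3)$ of order dividing $1152$ are examined directly — and exactly the order-$1152$ group over $\FF_{3^4}$ survives as the stated exception, which I would verify by a short explicit computation of its orbit sizes on $\FF_3^4$ (the orbit lengths are all $< 1152^{1/2} \approx 33.9$, forcing every centralizer to have order $> |G|^{1/2}$).
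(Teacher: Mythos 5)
Your overall strategy is essentially the paper's: decompose $V$ into irreducible constituents and apply Theorem~\ref{c3} to each, treat the characteristic $2$ and $3$ constituents separately using the divisibility hypothesis, and prove (3) via Theorem~\ref{Dolfi} together with a direct examination of its exceptional cases. Your ``iterative bookkeeping'' for assembling the constituents is exactly the paper's induction on $|V|$ (write $V=U\oplus W$, set $N=\bC_G(U)$, choose $u$ and $w$ by induction for $G/N$ on $U$ and for $N$ on $W$, and put $v=u+w$), so that part is sound; the side remark that an irreducible linear group has a base of size $b$ if and only if ``the associated primitive group'' does is false in general, but also unnecessary, since Theorem~\ref{c3} applies directly to irreducible groups.

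There are, however, two genuine gaps. First, in part (2) you never actually handle the characteristic $2$ and $3$ constituents. You correctly observe that the hypothesis forces these constituents to be small (indeed the whole $2$-part of $|V|$ is at most $32$ and the $3$-part at most $27$), but neither route you offer works as stated: part (3) cannot be invoked because it assumes $G$ solvable (a $3$-solvable group need not be solvable) and $V$ primitive, and there is no general reduction of the bound $|\bC_G(v)|\le |G|^{1/2}$ from an imprimitive irreducible module to its primitive constituent --- the examples $\GL(2,2)\wr\Sym(3)$ on a module of size $64$ and $\GL(2,3)\wr\Sym(2)$ on one of size $81$, noted in the paper, show such a reduction cannot exist unconditionally. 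The paper closes this case simply by noting that in the irreducible case $|V|\le 32$ or $|V|\le 27$ and checking the finitely many possibilities by GAP; your proposal contains no substitute for that verification, and you yourself flag it as the unresolved ``technical heart.'' Second, in part (3) the passage from a base $\{v,w\}$ of size $2$ to a vector with centralizer of order at most $|G|^{1/2}$ is botched: the counting/averaging sketch does not bound $\min_v|\bC_G(v)|$ (bounding the average of $|\bC_G(v)|$ amounts to bounding the number of $G$-orbits on $V$, which Dolfi's theorem does not give). The correct, standard step is one line: $|\bC_G(v)|\,|\bC_G(w)|=|\bC_G(v)\bC_G(w)|\,|\bC_G(v)\cap\bC_G(w)|\le |G|$, so one of the two centralizers has order at most $|G|^{1/2}$ --- you implicitly used the size-$3$ analogue of this in part (1). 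With that one-line argument, plus a finite computational check of the small modules in characteristic $2$ and $3$ and of Dolfi's exceptional cases (3) and (4) (where only the order-$1152$ group on a module of size $3^4$ survives as an exception), your outline becomes the paper's proof.
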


\begin{proof}
  Let us first prove parts (1) and (2) of the lemma. We use induction
  on the size of $V$. (If $|V|$ is a prime then the result is clear.)

  Suppose that $V$ is reducible. Put $\epsilon = 1/2$ if $|V|$ is not
  divisible by $64$ nor $81$ and $2/3$ otherwise.  We have $V = U
  \oplus W$ where $U$ and $W$ are non-zero $G$-modules of possibly
  mixed characteristic.  Then $G/N$ acts faithfully and completely
  reducibly on $U$, a module of size smaller than $|V|$, where $N =
  \bC_G(U)$. So by the induction hypothesis, there exists a vector $u$
  in $U$ with $|\bC_{G/N}(u)| \leq |G/N|^{\epsilon}$. Note that $N$
  acts faithfully and completely reducibly on $W$ and thus by
  induction, there exists a vector $w$ in $W$ with $|\bC_{N}(w)| \leq
  |N|^{\epsilon}$. Put $v = u+w$.  Then $\bC_{G}(v)N/N \leq
  \bC_{G/N}(u)$ and so \[|\bC_{G}(v): N \cap \bC_{G}(v)| \leq
  |G/N|^{\epsilon}.\] But \[|N \cap \bC_{G}(v)| = |\bC_{N}(v)| =
  |\bC_{N}(w)| \leq |N|^{\epsilon}.\] These yield
  \[|\bC_{G}(v)| \leq |G|^{\epsilon}.\]

  Thus we may assume that $V$ is irreducible of characteristic $p$.
  Then by Theorem~\ref{c3} there exists a base of size at most $3$
  and, in case $p \geq 5$, a base of size at most $2$. In particular
  there exists a vector $v$ in $V$ with the property that
  $|\bC_{G}(v)| \leq |G|^{2/3}$ and a vector $v$ in $V$ with the
  property that $|\bC_{G}(v)| \leq |G|^{1/2}$, in case $p \geq 5$.
  This proves part (1) and also part (2) of the lemma in case $p \geq
  5$. Suppose that the conditions of part (2) hold with $p = 2$ or
  $3$. Then $|V| \leq 32$ or $|V| \leq 27$ in the respective
  cases. But in these cases one can check the validity of the lemma by
  GAP \cite{GAP}.

  Let us now turn to the proof of part (3) of the lemma. Suppose that
  $V$ is a primitive $G$-module and that $G$ is a solvable group. We
  may also assume that the minimal base size for $G$ on $V$ is at
  least $3$. By Theorem \ref{Dolfi} this happens only if case (3) or
  (4) of Theorem \ref{Dolfi} holds. But in these cases one can check
  by GAP \cite{GAP} that there always exists a vector $v$ in $V$ with
  $|\bC_{G}(v)| \leq |G|^{1/2}$, except when $|V|=3^4$ and $G$ is a
  unique subgroup (up to conjugacy) in $GL(V)$ of order $1152$.
\end{proof}

\begin{remark} Notice that $G = \GL(2,2) \wr \mathrm{Sym}(3)$ and
  $\GL(2,3) \wr \mathrm{Sym}(2)$ are imprimitive linear groups acting
  on a vector space $V$ of size $64$ and $81$ respectively with the
  property that there does not exist a vector $v \in V$ with
  $|\bC_{G}(v)| \leq |G|^{1/2}$. Therefore the second and third
  statements of Lemma~\ref{lemma key} are best possible in some sense.
\end{remark}

%%%%%%%%%%%%%%%%%%%%%%%%%%%%%%%%%%%%%%%%%%%%%%%%%%%%%%%%%%%%%%%%%%%%%%%%%%%

\section{Proofs of Theorems \ref{theorem-extendMoretoWolf} and
  \ref{primitive}}\label{Section 4}

Let $G$ be a finite $\pi$-solvable group where $\pi$ is the set of the
prime divisors of $|{\bF}^{\ast}(G/\Phi(G))|$. Then
${\bF}^{\ast}(G/\Phi(G)) = \bF(G/\Phi(G))$ and so $\bF(G/\Phi(G)) =
\bF(G)/\Phi(G)$ is a faithful $G/ \bF(G)$-module. By Gasch\"{u}tz's
theorem (see \cite[III.~4.5]{Huppert}) we also have that
$\bF(G)/\Phi(G)$ is a completely reducible $G/ \bF(G)$-module.
Therefore $\Irr(\bF(G)/\Phi(G))$ is a completely reducible and
faithful $G/ \bF(G)$-module as well. By applying Lemma~\ref{lemma
  key}, we have an irreducible character $\lambda$ of $\bF(G)/\Phi(G)$
(and thus of $\bF(G)$) whose stabilizer in $G/\bF(G)$ has size at most
$|G:\bF(G)|^{a}$ where $a = 1/2$ if the conditions of the second or
third statements of Lemma~\ref{lemma key} are satisfied and $a=2/3$
otherwise. This means that $\lambda$ lies in a $G$-orbit of size at
least $|G:\bF(G)|^{1-a}$.  Theorem~\ref{theorem-extendMoretoWolf} now
follows by Clifford's theorem.

Now we turn to the proof of Theorem~\ref{primitive}. Let $G$ be a
finite solvable group with $G/\bF(G)$ acting faithfully and
primitively on $\bF(G)/\Phi(G)$. Then $G/\bF(G)$ also acts faithfully
and primitively on $\Irr(\bF(G)/\Phi(G))$.  By the previous paragraph
we arrive to a conclusion unless the $G/\bF(G)$-module
$\Irr(\bF(G)/\Phi(G))$ has size $3^4$ and $|G/\bF(G)| = 1152$.  This
happens only if the faithful, primitive $G/\bF(G)$-module $V =
\bF(G)/\Phi(G)$ has size $3^4$.

As mentioned in the introduction, for a finite group $T$ let $k(T)$
be the number of conjugacy classes of $T$. The group $T = G/\Phi(G)$
has the form $HV$, a split extension of $H$ and $V$, where $H$ is
isomorphic to $G/\bF(G)$ and it acts the same way on $V$ as
$G/\bF(G)$ does (apply \cite[III.~4.4]{Huppert} to the group $T$
with a trivial Frattini subgroup). Thus $k(G/\Phi(G)) = k(HV)$. But
by GAP \cite{GAP} we have $k(HV) \leq |V|$ which gives $k(G/\Phi(G))
\leq |\bF(G)/\Phi(G)|$. This and Nagao's result~\cite{Nagao} then
gives \[k(G) \leq k(G/\Phi(G)) k(\Phi(G)) \leq |\bF(G)|.\] But then
the inequality $|G|/k(G) \leq b(G)^2$ yields the desired conclusion.

%%%%%%%%%%%%%%%%%%%%%%%%%%%%%%%%%%%%%%%%%%%%%%%%%%%%%%%%%%%%%%%%%%%%%

\section{Extendible characters of large degree in simple
  groups}\label{sectionsimplegroups}

Our proof of Theorem~\ref{main theorem
productofcompositionfactors} relies on the classification of
finite simple groups. The specific consequence of the
classification is the following result, which may have other
applications.

\begin{theorem}\label{main theorem 2}
  Let $S$ be a finite non-abelian simple group and let $d(S)$ denote
  the largest degree of an irreducible character of $S$ that extends
  to $\Aut(S)$. Then we have $d(S)\geq |S|^{1/3}$.
\end{theorem}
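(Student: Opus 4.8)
The plan is to prove Theorem~\ref{main theorem 2} by a case division according to the classification of finite simple groups, verifying in each family that some irreducible character extending to $\Aut(S)$ has degree at least $|S|^{1/3}$. For the sporadic groups and the Tits group, one checks the claim directly from the known character tables and their automorphism data (e.g.\ via the ATLAS or GAP), so this is a finite computation. For the alternating groups $\Al_n$ with $n \geq 5$ (treating $\Al_5,\Al_6$ by hand because of their exceptional Schur multipliers and outer automorphism groups), I would use the fact that the standard $(n-1)$-dimensional character of $\Sy_n$ restricts irreducibly to $\Al_n$ and is $\Aut(\Al_n)$-invariant for $n\neq 6$; more efficiently, pick a partition $\lambda$ of $n$ that is not self-conjugate so that the irreducible $\chi^\lambda$ of $\Sy_n$ restricts irreducibly to $\Al_n$ and is fixed by the outer automorphism, and invoke the hook-length lower bounds for $\chi^\lambda(1)$ (for instance $\chi^\lambda(1) \geq \binom{n-1}{\lfloor (n-1)/2\rfloor}$ type estimates coming from a near-balanced two-row or two-column partition) to beat $|\Al_n|^{1/3} = (n!/2)^{1/3}$ for all sufficiently large $n$, closing the remaining small cases by direct inspection.

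The bulk of the work is the groups of Lie type, and here the natural tool is the Steinberg character $\St_S$, which has degree equal to $|S|_p$, the order of a Sylow $p$-subgroup, where $p$ is the defining characteristic. The Steinberg character is rational-valued and, crucially, extends to the full automorphism group $\Aut(S)$ (it is invariant under diagonal, field, and graph automorphisms and extends to each). So it suffices to show $|S|_p \geq |S|^{1/3}$, i.e.\ that the $p$-part of $|S|$ is at least the cube root of $|S|$, equivalently that $|S|_{p'}^{} \leq |S|_p^2$. For a simple group of Lie type of rank $r$ over $\FF_q$, we have $|S|_p = q^N$ with $N$ the number of positive roots, while $|S|_{p'} = \prod_i (q^{d_i} - 1)$ (suitably twisted), and $\sum_i d_i = N + r$. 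Thus $|S|_{p'} < q^{N+r}$, so the inequality $|S|_{p'} \leq q^{2N}$ holds as soon as $N \geq r$, which is true for every family except when the rank is comparable to $N$ — essentially only $\PSL(2,q)$ (where $N = 1 = r$) and a handful of other very small-rank cases such as $\PSL(3,q)$, $\PSU(3,q)$, $\Sp(4,q)$, ${}^2B_2(q)$, ${}^2G_2(q)$, $G_2(q)$, ${}^2F_4(q)'$, etc.

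The main obstacle is precisely the low-rank groups of Lie type where $\St$ alone does not suffice, the worst offender being $\PSL(2,q)$: here $|S| = q(q-1)(q+1)/\gcd(2,q-1)$ and $|S|_p = q$, so $q \geq |S|^{1/3}$ fails whenever $q$ is large. For these I would instead use a different character: $\PSL(2,q)$ has irreducible characters of degree $q+1$ and $q-1$, and $(q+1)^3 > q(q-1)(q+1) \geq |S|$ and likewise $(q-1)^3 \geq |\PSL(2,q)|$ for $q$ not too small, so it remains only to check that a character of degree roughly $q$ can be chosen $\Aut$-invariant and extendible — here one uses that $\PGL(2,q)$ has an irreducible character of degree $q+1$ or $q-1$ restricting appropriately, handling the field automorphisms by a further extension argument, and disposes of the finitely many small $q$ directly. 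A similar ad hoc treatment with the next-smallest unipotent or semisimple character degrees covers $\PSL(3,q)$, $\PSU(3,q)$, $\Sp(4,q)$ (and $B_2$, $G_2$, the Suzuki and Ree families), in each case comparing an explicit polynomial-in-$q$ character degree against the cube root of the explicit order polynomial and then clearing a short finite list of exceptions by computer. The care needed to guarantee extendibility to $\Aut(S)$ — not merely invariance — in these low-rank exceptional cases, where outer automorphism groups and Schur multipliers can be irregular, is where the argument is most delicate and where I would lean on the detailed results of Lübeck and others on character degrees and on the extendibility criteria (e.g.\ those of Schmid and of Malle) rather than reprove them.
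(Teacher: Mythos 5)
Your treatment of the alternating groups does not work. For $\Al_n$ you must exhibit an extendible character of degree at least $|\Al_n|^{1/3}=(n!/2)^{1/3}$, which grows super-exponentially (like $(n/e)^{n/3}$), whereas the characters you propose to use are exponentially too small: the standard character has degree $n-1$, and any two-row or two-column partition $(n-k,k)$ has degree $\binom{n}{k}-\binom{n}{k-1}\leq 2^{n}$, so the bound $\chi^{\lambda}(1)\geq\binom{n-1}{\lfloor (n-1)/2\rfloor}\approx 2^{n-1}/\sqrt{n}$ fails to beat $(n!/2)^{1/3}$ for all large $n$ (already around $n\approx 25$ it is the wrong way). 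To get degrees of the right order one needs partitions of nearly square shape, whose degrees are close to $\sqrt{n!}$, and then one must still ensure the partition is not self-conjugate (so that the restriction to $\Al_n$ is irreducible and extends to $\Sy_n=\Aut(\Al_n)$ for $n\neq 6$). The paper does exactly this work: it proves by induction on $n\geq 30$, via the branching rule and the bounds $|A(\lambda)|<\sqrt{2n}+1$, $|R(\lambda)|<\sqrt{2n}$, that $d(\Al_{n+1})\geq (n+1)^{1/3}d(\Al_n)$, with a careful case analysis when adding or removing a node produces a self-conjugate diagram, and checks $5\leq n\leq 30$ by computer. Your sketch contains no substitute for this step, and as written the asserted estimate is simply false, so this is a genuine gap.

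Your Lie-type analysis, by contrast, is overcomplicated because of an arithmetic misdiagnosis. The number of positive roots $N$ always satisfies $N\geq r$ (with equality only in rank one), so $|S|\leq q^{N}\prod_i|q^{d_i}-\epsilon_i|<q^{2N+r}\leq q^{3N}$ in every family, twisted or not; hence the Steinberg character of degree $|S|_p=q^{N}$ already satisfies $\St_S(1)>|S|^{1/3}$ with no exceptions. In particular your ``worst offender'' $\PSL(2,q)$ is fine: $|\PSL(2,q)|\leq q(q^2-1)<q^{3}$, so $q>|S|^{1/3}$, which is precisely why the paper's remark cites $\PSL_2(q)$ as showing the exponent $1/3$ is tight but still attained by $\St$. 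The paper therefore needs only Feit's theorem that $\St_S$ extends to $\Aut(S)$ plus the order list; your proposed ad hoc arguments with characters of degree $q\pm 1$ (and the attendant delicate extendibility questions for $\PSL_3$, $\PSU_3$, $\Sp_4$, Suzuki and Ree groups) are unnecessary, though not incorrect in principle.
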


The proof of Theorem~\ref{main theorem 2} is fairly straightforward
for sporadic simple groups and simple groups of Lie type. So most of
this section is devoted to the proof for the alternating groups. We
first recall some basic combinatorics connecting partitions, Young
diagrams, and representation theory of the alternating and symmetric
groups.

Let $n$ be a natural number. It is well-known that there are
bijective correspondences between the partitions of $n$, the Young
diagrams of cardinality $n$, and the irreducible complex
characters of $\Sy_n$. Let $\lambda$ be a partition of $n$. That
is, $\lambda$ is a finite sequence $(\lambda_1,\lambda_2, \ldots
,\lambda_k)$ for some $k$ such that
$\lambda_1\geq\lambda_2\geq\cdots \geq\lambda_k$ and
$\lambda_1+\lambda_2+\cdots+\lambda_k=n$. Each $\lambda_i$ is
called a part of $\lambda$.

The Young diagram corresponding to $\lambda$, denoted by
$Y_\lambda$, is defined to be the finite subset of $\NN\times\NN$
such that \[(i,j)\in Y_\lambda \text{ if and only if } i\leq
\lambda_j.\] Two partitions of $n$ whose associated Young diagrams
transform into each other when reflected about the line $y=x$ are
called conjugate partitions. The partition conjugate to $\lambda$ is
denoted by $\overline{\lambda}$. If $\lambda=\overline{\lambda}$
then $Y_\lambda$ is symmetric and we say that $\lambda$ is
self-conjugate. For each node $(i,j)\in Y_\lambda$, we define the
so-called \emph{hook length} $h(i,j)$ to be the number of nodes that
are directly above it, directly to the right of it, or equal to it.
More precisely,
\[h(i,j):=1+\lambda_j+\overline{\lambda}_i-i-j.\]
We denote by $\chi_\lambda$ or $\chi_{Y_\lambda}$ the irreducible
character of $\Sy_n$ corresponding to $\lambda$ and $Y_\lambda$. The
degree of $\chi_\lambda$ is given by the \emph{hook-length formula}
of J.\,S.~Frame, G.\,B.~Robinson, and R.\,M.~Thrall,
see~\cite{Frame-Robinson-Thrall}:
\[f_\lambda:=\chi_{\lambda}(1)=\chi_{Y_\lambda}(1)=
\frac{n!}{\prod _{(i,j)\in Y_\lambda}h(i,j)}.\]

The irreducible characters of $\Al_n$ can be obtained by
restricting those of $\Sy_n$ to $\Al_n$. More explicitly,
$\chi_{\lambda}\downarrow_{\Al_n}=\chi_{\overline{\lambda}}\downarrow_{\Al_n}$
is irreducible if $\lambda$ is not self-conjugate. Otherwise,
$\chi_{\lambda}\downarrow_{\Al_n}$ splits into two different
irreducible characters of the same degree. In short, the degrees
of the irreducible characters of $\Al_n$ are labelled by
partitions of $n$ and are given by
$$ \widetilde{f}_\lambda= \left\{\begin
{array}{ll}
f_\lambda & \text{ if } \lambda\neq\overline{\lambda},\\
f_\lambda/2 & \text{ if } \lambda=\overline{\lambda}.
\end {array} \right.$$

Given a partition $\lambda$ of $n$, we define $A(\lambda)$ and
$R(\lambda)$ to be the sets of nodes that can be respectively
added or removed from $Y_\lambda$ to obtain another Young diagram
corresponding to a certain partition of $n+1$ or $n-1$
respectively. It is known (see~\cite[\S2]{Larsen-Malle-Tiep} for
instance) that
\[|A(\lambda)|<\sqrt{2n}+1 \text{ and } |R(\lambda)|<\sqrt{2n}.\]
The branching rule~\cite[\S9.2]{James} asserts that the
restriction of $\chi_\lambda$ to $\Sy_{n-1}$ is a sum of
irreducible characters $\chi_{Y_\lambda \backslash \{ (i,j) \}}$
as $(i,j)$ goes over all nodes in $R(\lambda)$. Also, by Frobenius
reciprocity, the induction of $\chi_\lambda$ to $\Sy_{n+1}$ is a
sum of irreducible characters $\chi_{Y_\lambda \cup \{(i,j)\}}$ as
$(i,j)$ goes over all nodes in $A(\lambda)$.

\begin{lemma}\label{lemma alternating groups} Theorem~\ref{main theorem
2} is true for the simple alternating groups.
\end{lemma}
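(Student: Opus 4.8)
The plan is to produce, for each $n \geq 5$, a self-conjugate partition $\lambda$ of $n$ whose associated irreducible character of $\Al_n$ has degree at least $|\Al_n|^{1/3} = (n!/2)^{1/3}$, noting that an irreducible character of $\Al_n$ arising from a \emph{non}-self-conjugate partition extends to $\Sy_n = \Aut(\Al_n)$ (for $n \neq 6$), and one coming from a self-conjugate $\lambda$ also extends after splitting since the two constituents are swapped by $\Sy_n$ and hence each is $\Sy_n$-invariant is false — so actually the cleaner route is: the full character $\chi_\lambda\!\downarrow_{\Al_n}$ for non-self-conjugate $\lambda$ is $\Sy_n$-stable and has degree $f_\lambda$, so it suffices to exhibit a non-self-conjugate $\lambda \vdash n$ with $f_\lambda \geq (n!/2)^{1/3}$. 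First I would dispose of small $n$ (say $5 \leq n \leq 20$ or so) by direct inspection of character tables. For the generic case, the key is that the \emph{largest} degree $b(\Sy_n)$ is known to satisfy $b(\Sy_n) \geq \sqrt{n!}\,/\,n^{c}$ (a classical estimate, e.g. via the fact that $\sum_\lambda f_\lambda^2 = n!$ together with the polynomial bound $p(n)$ on the number of partitions, giving $b(\Sy_n) \geq (n!/p(n))^{1/2}$), and $\sqrt{n!/p(n)}$ comfortably exceeds $(n!/2)^{1/3}$ for large $n$ since $n!$ grows super-exponentially while $p(n)$ is sub-exponential.

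The one subtlety is self-conjugacy: the partition achieving $b(\Sy_n)$ might be self-conjugate, in which case the best $\Sy_n$-stable irreducible \emph{of $\Al_n$} it provides has degree only $f_\lambda/2$. To handle this I would use the branching rule recalled in the excerpt: if $\lambda \vdash n$ is self-conjugate, then picking any node in $R(\lambda)$ off the main diagonal and removing it, then re-adding a node to restore degree $n$ but break the symmetry, produces a nearby non-self-conjugate $\mu \vdash n$ with $f_\mu \geq f_\lambda / |A(\cdot)| \cdot (\text{something}) $; more precisely, since $f_\lambda = \sum_{(i,j)\in R(\lambda)} f_{\lambda\setminus(i,j)}$ and $|R(\lambda)| < \sqrt{2n}$, some $\nu = \lambda \setminus (i,j) \vdash n-1$ has $f_\nu \geq f_\lambda/\sqrt{2n}$; then $f_\lambda \le \sum_{(i,j)\in A(\nu)} f_{\nu \cup (i,j)}$ contains terms including non-self-conjugate partitions of $n$, so some non-self-conjugate $\mu \vdash n$ has $f_\mu \geq f_\nu / \sqrt{2n} \geq f_\lambda / (2n) \geq b(\Sy_n)/(2n)$. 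Thus in all cases $\Al_n$ has a $\Sy_n$-stable irreducible character of degree at least $b(\Sy_n)/(2n) \geq \sqrt{n!/p(n)}/(2n)$.

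The final step is the numerical comparison: I need $\sqrt{n!/p(n)}\,/\,(2n) \geq (n!/2)^{1/3}$, i.e. $(n!)^{1/6} \geq 2n \sqrt{p(n)} / 2^{1/3} = 2^{2/3} n \sqrt{p(n)}$. Using a crude bound such as $p(n) \leq e^{C\sqrt n}$ and Stirling's $n! \geq (n/e)^n$, the left side is at least $(n/e)^{n/6}$, which dominates $2^{2/3} n \, e^{(C/2)\sqrt n}$ for all sufficiently large $n$; I would pin down an explicit threshold $n_0$ and check $5 \le n \le n_0$ by GAP \cite{GAP} or the Atlas. The main obstacle I anticipate is purely bookkeeping — making the "perturb a self-conjugate partition to a non-self-conjugate one of the same size without losing more than a factor $2n$ in degree" argument airtight, and then being honest about the cutoff $n_0$ below which the asymptotic estimate is too weak and an explicit check is needed; the large-$n$ asymptotics themselves are routine.
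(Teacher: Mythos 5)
Your proposal is essentially correct, but it follows a genuinely different route from the paper. The paper fixes the base range $5\le n\le 30$ by computer and then proves the recursive inequality $d(\Al_{n+1})\ge (n+1)^{1/3}d(\Al_n)$ by induction: it takes a largest extendible character of $\Al_n$, induces to $\Sy_{n+1}$, and uses the branching rule in both directions, with a careful case analysis of when the added (and then removed) nodes produce symmetric Young diagrams. You instead reduce the problem to exhibiting one non-self-conjugate $\lambda\vdash n$ with $f_\lambda\ge (n!/2)^{1/3}$, start from the classical bound $b(\Sy_n)\ge\sqrt{n!/p(n)}$, and, when the maximizing partition is self-conjugate, perturb it (remove a node, re-add a different one) to a non-self-conjugate partition losing at most a polynomial factor, finishing with Stirling versus the subexponential growth of $p(n)$. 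Your approach is conceptually shorter and yields the stronger asymptotic $d(\Al_n)\ge (n!)^{1/2-o(1)}$ (consistent with the paper's remark about rectangular diagrams), at the price of partition-counting asymptotics, an explicit threshold $n_0$ with a possibly larger computer-checked range, and one lemma you did not state: to control the perturbation you need that \emph{at most one} addable node of $\nu$ yields a self-conjugate partition. This is true --- if two distinct additions to $\nu$ were both self-conjugate, the added nodes would both have to lie on the main diagonal, and a partition has at most one addable diagonal node --- and that unique self-conjugate addition is then $\lambda$ itself, with $f_\lambda\le\sqrt{2n}\,f_\nu$; subtracting it from $\sum_{(i,j)\in A(\nu)}f_{\nu\cup(i,j)}=n f_\nu$ and dividing by $|A(\nu)|<\sqrt{2n}+1$ gives a non-self-conjugate $\mu$ with $f_\mu\ge f_\nu/\sqrt{2n}\ge b(\Sy_n)/(2n)$ for all $n\ge 6$, exactly as you claimed. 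With that lemma supplied, together with the observation (which you made) that $n=6$ and all $n$ below your threshold are absorbed into the computational check --- covering the exceptional $\Aut(\Al_6)$ --- your argument is complete and correct; your self-correction at the start is also right, since the two constituents of a self-conjugate restriction are interchanged, not fixed, by $\Sy_n$, which is precisely why the perturbation step is needed.
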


\begin{proof}
The validity of the lemma can be checked by computer for all values
of $n$ in the range $5 \leq n \leq 30$. We will prove by induction
on $n\geq 30$ that $d(\Al_{n+1})\geq (n+1)^{1/3}d(\Al_n)$. This then
implies the inequality $d(\Al_n)\geq (n!/2)^{1/3}$ immediately.

Let $\theta$ be an irreducible character of $\Al_n$ with $n\geq 30$
such that $\theta$ is extendible to $\Sy_n$ and
$\theta(1)=d(\Al_n)$. Assume that $\theta$ extends to
$\chi\in\Irr(\Sy_n)$. Let $\lambda$ and $Y$ be respectively the
partition and Young diagram corresponding to $\chi$. Consider the
induction of $\chi$ to $\Sy_{n+1}$. We have
\begin{equation}\label{equation 1}
  \chi^{\Sy_{n+1}}(1)=(n+1)\chi(1)=(n+1)d(\Al_n)
\end{equation} and
\begin{equation}\label{equation 2}
  \chi^{\Sy_{n+1}}=\sum_{(i,j)\in A(\lambda)} \chi_{Y \cup\{(i,j)\}}
\end{equation} by the branching rule. There are two cases
arising.

\medskip

(1) None of the Young diagrams in $\{Y\cup \{(i,j)\}\mid (i,j)\in
A(\lambda)\}$ are symmetric. Then the irreducible characters
$\chi_{Y \cup \{(i,j)\}}$ restrict irreducibly to $\Al_{n+1}$ so
that
\[\chi_{Y \cup \{(i,j)\}}(1)\leq d(\Al_{n+1})\] for every $(i,j)\in
A(\lambda)$. We therefore deduce that
\[\chi^{\Sy_{n+1}}(1)\leq |A(\lambda)|d(\Al_{n+1}).\]
Since $|A(\lambda)|< \sqrt{2n}+1$ and
$\chi^{\Sy_{n+1}}(1)=(n+1)d(\Al_n)$, it follows that
\[(n+1)d(\Al_n)\leq \lceil \sqrt{2n}\rceil d(\Al_{n+1}),\] where
$\lceil x\rceil$ denotes the smallest integer not smaller than
$x$. Thus as $n \geq 30$ we conclude that
\[d(\Al_{n+1})\geq \frac{n+1}{\lceil \sqrt{2n}\rceil}d(\Al_n) \geq
{(n+1)}^{1/3} d(\Al_n),\] as desired.

\medskip

(2) There is a symmetric Young diagram in $\{Y\cup \{(i,j)\}\mid
(i,j)\in A(\lambda)\}$. Then there is exactly one such diagram and at
most $\lceil \sqrt{2n}-1\rceil$ non-symmetric diagrams in $\{Y\cup
\{(i,j)\}\mid (i,j)\in A(\lambda)\}$. Let $Y'$ be that symmetric Young
diagram and $\mu$ be the corresponding partition. By the branching
rule, we have
\[{\chi_{Y'}}\downarrow_{\Sy_n}=\sum_{(i,j)\in R(\mu)} \chi_{Y'\backslash \{ (i,j) \}}.\]
Now there are two subcases arising.

\medskip

(a) None of the Young diagrams of the form $Y'\backslash \{ (i,j)
\}$ where $(i,j)\in R(\mu)$ are symmetric. Then the characters
associated to these diagrams restrict irreducibly to $\Al_n$ and
thus their degrees are at most $d(\Al_n)$. As $|R(\mu)|
<\sqrt{2n+2}$, we deduce that
\[\chi_{Y'}(1)= \sum_{(i,j)\in R(\mu)} \chi_{Y'\backslash \{ (i,j)
  \}}(1)\leq \lceil\sqrt{2n+2}-1\rceil d(\Al_n).\]

Combining \eqref{equation 1}, \eqref{equation 2}, and the last
inequality, we have
\begin{align*} (n+1)d(\Al_n)&= \sum_{(i,j)\in A(\lambda)} \chi_{Y \cup
\{(i,j)\}}(1)\\
&=\chi_{Y'}(1)+ \sum_{(i,j)\in A(\lambda), Y \cup \{(i,j)\}\neq Y' }
\chi_{Y \cup \{(i,j)\}}(1)\\
&\leq \lceil\sqrt{2n+2}-1\rceil d(\Al_n)+
(|\Al(\lambda)|-1)d(\Al_{n+1})\\
&\leq \lceil\sqrt{2n+2}-1\rceil d(\Al_n)+\lceil \sqrt{2n}-1\rceil
d(\Al_{n+1}).
\end{align*}
Thus as $n \geq 30$ we deduce that
\[d(\Al_{n+1})\geq \frac{ n+2-\lceil \sqrt{2n+2}\rceil}{\lceil
  \sqrt{2n}-1\rceil} d(\Al_n) \geq {(n+1)}^{1/3} d(\Al_n),\] as
wanted.

\medskip

(b) There is a symmetric Young diagram in $\{Y'\backslash
\{(i,j)\}\mid (i,j)\in R(\mu)\}$. Then there is exactly one such
diagram and at most $\lceil \sqrt{2n+2}-2\rceil$ non-symmetric
diagrams in $\{Y'\backslash \{(i,j)\}\mid (i,j)\in R(\mu)\}$. Let
$Y''$ be that symmetric Young diagram and $\nu$ be the corresponding
partition. So we have two symmetric Young diagrams $Y'$ and $Y''$
and $Y''$ is obtained from $Y'$ by removing a node. Therefore, if
another node is removed from $Y''$ to get a Young diagram (of size
$n-1$), the resulting diagram can not be symmetric. Therefore, again
by the branching rule,
\[\chi_{Y''}(1)\leq \lceil \sqrt{2n}-1\rceil d(\Al_{n-1}).\]
It follows that
\[\chi_{Y'}(1)\leq \lceil \sqrt{2n}-1\rceil d(\Al_{n-1})+\lceil \sqrt{2n+2}-2\rceil d(\Al_n).\]
Therefore, as in subcase (a),
\begin{align*} (n+1)d(\Al_n)
&=\chi_{Y'}(1)+ \sum_{(i,j)\in A(\lambda), Y \cup \{(i,j)\}\neq Y' }
\chi_{Y \cup \{(i,j)\}}(1)\\
&\leq \lceil \sqrt{2n}-1\rceil d(\Al_{n-1})+\lceil
\sqrt{2n+2}-2\rceil d(\Al_n)+\lceil \sqrt{2n}-1\rceil d(\Al_{n+1}).
\end{align*}
Using the induction hypothesis that $d(\Al_{n-1})\leq
n^{-1/3}d(\Al_n)$ and noting $n\geq 30$, we obtain
\[d(\Al_{n+1})\geq \frac{ n+3-\lceil \sqrt{2n+2}\rceil-\lceil \sqrt{2n}-1\rceil n^{-1/3}}{\lceil \sqrt{2n}-1\rceil} d(\Al_n) \geq {(n+1)}^{1/3}
d(\Al_n),\] as wanted. The proof is complete.
\end{proof}

\begin{remark}
It can be shown by viewing Young diagrams of rectangular shapes that for any $\delta > 0$ there exists an $N > 0$ such that whenever $n > N$ we have $d(\Al_{n}) > {n!}^{(1/2) - \delta}$.
\end{remark}

\begin{proof}[Proof of Theorem~\ref{main theorem 2}] If $S$ is a
simple sporadic group or the Tits group, the proof is a routine
check from the Atlas~\cite{Atl1}. As the alternating groups have
been already handled in Lemma~\ref{lemma alternating groups}, we
can assume that $S$ is a simple group of Lie type in
characteristic $p$. We then realize that $S$ has the so-called
\emph{Steinberg character} $\St_S$ of degree $\St_S(1)=|S|_p$, the
$p$-part of the order of $S$, and $\St_S$ is extendible to
$\Aut(S)$ (see~\cite{Feit}). Now we can check the inequality
$|S|_p>|S|^{1/3}$ easily by consulting the list of simple groups
and their orders, see~\cite[p. xvi]{Atl1} for instance.
\end{proof}

\begin{remark} The bounding constant $1/3$ in Theorem~\ref{main theorem
2} is tight since the order of $\PSL_2(q)$ is
$q(q^2-1)/\gcd(2,q-1)$ and the largest degree of an irreducible
character of $\PSL_2(q)$ extendible to $\Aut(\PSL_2(q))$ is either
$q$ or $q+1$.
\end{remark}

%%%%%%%%%%%%%%%%%%%%%%%%%%%%%%%%%%%%%%%%%%%%%%%%%%%%%%%%%%%%%%%%%%%%%

\section{Bounding the product of the orders of
non-abelian composition  factors}\label{section-product-arbitrary-groups}

In this section we prove Theorem~\ref{main theorem
productofcompositionfactors}. To do that, we first apply the main
result of the previous section to obtain the following.

\begin{proposition}\label{proposition solvable radical} Let $N$ be a
non-abelian minimal normal subgroup of a finite group $G$. Then
\[b(G)^{3}\geq b(G/N)^{3} |N|.\]
\end{proposition}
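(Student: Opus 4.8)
The goal is to show $b(G)^3 \ge b(G/N)^3 |N|$ for a non-abelian minimal normal subgroup $N$ of a finite group $G$. Since $N$ is a non-abelian minimal normal subgroup, $N \cong S^k$ for some non-abelian simple group $S$ and some $k \ge 1$, and $G$ permutes the $k$ simple factors transitively. The strategy is to produce an irreducible character of $G$ whose degree is bounded below by $b(G/N) \cdot |N|^{1/3}$; then $b(G) \ge b(G/N)|N|^{1/3}$, which rearranges to the claimed inequality.

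First I would invoke Theorem~\ref{main theorem 2}: the simple group $S$ has an irreducible character $\varphi$ of degree $d := d(S) \ge |S|^{1/3}$ that extends to $\Aut(S)$. Form the character $\theta := \varphi \times \cdots \times \varphi$ ($k$ copies) of $N = S^k$; its degree is $d^k \ge |S|^{k/3} = |N|^{1/3}$. The point of choosing an $\Aut(S)$-extendible constituent is that $\theta$ is then invariant under the full wreath product $\Aut(S) \wr \Sy_k$, and in particular $G$-invariant: any element of $G$ permutes the coordinates and acts within each coordinate by an automorphism of $S$, so it fixes $\theta$ up to the $\Aut(S)$-conjugacy that $\varphi$ already tolerates. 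Hence $\theta$ extends to $G$ (one needs that the obstruction cohomology class vanishes; the standard fact here is that an $\Aut(S)\wr\Sy_k$-invariant character of $S^k$ extends to any group between $S^k$ and $\Aut(S)\wr\Sy_k$ — this is the Gallagher/character-triple machinery, e.g.\ \cite[Theorem~6.1]{Isaacs}-style results, or can be quoted from work of Bianchi--Gillio or Schmid on extendibility). Call an extension $\hat\theta \in \Irr(G)$; note $\hat\theta(1) = \theta(1) = d^k \ge |N|^{1/3}$.

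Next, by Gallagher's theorem, multiplication by $\hat\theta$ gives a bijection between $\Irr(G/N)$ and the set of irreducible constituents of $\theta^G$ lying over $\theta$; concretely, for every $\beta \in \Irr(G/N)$ (inflated to $G$), the product $\beta \hat\theta$ is irreducible of degree $\beta(1)\hat\theta(1)$. Choosing $\beta$ with $\beta(1) = b(G/N)$ yields an irreducible character of $G$ of degree $b(G/N) \cdot d^k \ge b(G/N)\,|N|^{1/3}$. Therefore $b(G) \ge b(G/N)|N|^{1/3}$, and cubing gives $b(G)^3 \ge b(G/N)^3 |N|$, as required.

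The main obstacle is the extendibility step: arguing that the $G$-invariant character $\theta$ of $N$ actually extends to $G$, rather than merely being stabilized. The clean way to handle this is to build the extension over the overgroup $\Aut(S) \wr \Sy_k$ first — which is possible precisely because $\varphi$ extends to $\Aut(S)$, so $\varphi^{\times k}$ extends to $\Aut(S)\wr\Sy_k$ by a standard wreath-product argument (extend to $\Aut(S)^k$ coordinatewise, then extend that $\Sy_k$-invariant character through the split extension by $\Sy_k$) — and then restrict the extension along the embedding $G/C_G(N) \hookrightarrow \Aut(S)\wr\Sy_k$, using $C_G(N) \cap N = Z(N) = 1$. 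I would state this as a lemma and cite the appropriate extendibility result rather than reproving it.
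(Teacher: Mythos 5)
Your proposal is correct and follows essentially the same route as the paper's proof: apply Theorem~\ref{main theorem 2} to get an $\Aut(S)$-extendible $\alpha$ with $\alpha(1)\geq |S|^{1/3}$, extend $\alpha^{\times k}$ to $\Aut(S)\wr\Sy_k$ (the paper cites Mattarei~\cite{Mattarei} for this step, which you sketch directly), pull the extension back to $G$ via $G/\bC_G(N)\hookrightarrow\Aut(N)$ using $N\cap\bC_G(N)=1$, and finish with Gallagher's theorem. Your closing paragraph correctly identifies and repairs the only delicate point (mere $G$-invariance does not give extendibility; one must build the extension over the wreath product first), so the argument is complete.
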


\begin{proof} Since $N$ is a
non-abelian minimal normal subgroup of the finite group $G$, we know
that $N\cong S\times \cdots\times S$, a direct product of say $k$
copies of a non-abelian simple group $S$. By Theorem~\ref{main
theorem 2}, we know that there exists a non-principal character
$\alpha\in\Irr(S)$ such that $\alpha$ extends to $\Aut(S)$ and
$\alpha(1)\geq |S|^{1/3}$. In particular, the product character
$\psi:=\alpha \times \cdots \times \alpha$ is invariant under
$\Aut(N)=\Aut(S)\wr \Sy_k$. By~\cite{Mattarei}, the character $\psi$
extends to $\Aut(N)$. Since $N$ is embedded into $G/\bC_G(N)$ and
$G/\bC_G(N)$ embeds into $\Aut(N)$, the character $\psi$ extends to
$G/\bC_G(N)$ and therefore $\psi$ extends to a character $\chi$ of
$G$.

Applying Gallagher's Theorem~\cite[Corollary~6.17]{Isaacs1}, we
deduce that there is a bijection $\lambda\mapsto \lambda\chi$
between $\Irr(G/N)$ and the set of irreducible characters of $G$
lying above $\psi$. In particular, by taking $\lambda$ to be an
irreducible character of largest degree, we deduce that
$b(G/N)\chi(1)=b(G/N)\psi(1)$ is a character degree of $G$. It
follows that
\[b(G)\geq b(G/N)\psi(1).\]

Recall that $\psi:=\alpha \times \cdots \times \alpha$ and
$\alpha(1)\geq |S|^{1/3}$. Thus $\psi(1)\geq |N|^{1/3}$. It
follows by the above inequality that \[b(G)\geq b(G/N)|N|^{1/3},\]
and this completes the proof.
\end{proof}

Theorem~\ref{main theorem productofcompositionfactors} now is a
consequence of Proposition~\ref{proposition solvable radical}.

\begin{proof}[Proof of Theorem~\ref{main theorem
productofcompositionfactors}] In light of
Proposition~\ref{proposition solvable radical}, we see that if $N$
is a minimal normal subgroup of a finite group $X$, then
\[b(X)\geq b(X/N) \text{ if } N \text{ is abelian, and}\]
\[b(X)\geq b(X/N)|N|^{1/3} \text{ if } N \text{ is non-abelian}.\]

Let $1=M_0<M_1<\cdots<M_k=G$ be a chief series of $G$. That is,
$M_{i+1}/M_i$ is a minimal normal subgroup of $G/M_i$ for every
$i=0,1, \ldots ,k-1$. Arguing by induction on the number of chief
factors of $G$, we deduce that the product of the orders of all
non-abelian chief factors of $G$ is at most $b(G)^3$, which also
means that the product of the orders of all non-abelian
composition factors of $G$ is at most $b(G)^3$. The proof is
complete.
\end{proof}

%%%%%%%%%%%%%%%%%%%%%%%%%%%%%%%%%%%%%%%%%%%%%%%%%%%%%%%%%%%%%%%%%%%%%

%%%%%%%%%%%%%%%%%%%%%%%%%%%%%%%%%%%%%%%%%%%%%%%%%%%%%%%%%%%%%%%%%%

\section{Distinguished subgroups and the largest character degree}\label{section-other-bounds}

%Using the solution of Brauer's $k(B)$-problem for $p$-solvable
%groups (via the coprime $k(GV)$-problem), see \cite{kgv}, Guralnick
%and Robinson \cite[Theorem 13 (ii)]{Guralnick-Robinson} showed that
%if $G$ is a finite solvable group and $\pi$ is a set of primes with
%$\bO_{\pi'}(G)=1$ then $\cp(G) \leq 1/|G|_{\pi'}$.

We need two preliminary lemmas to prove
Theorem~\ref{main-theorem-Hallsubgroup}.

\begin{lemma}
\label{Brauer} Let $G$ be a finite $p$-solvable group and $V$ an
elementary abelian normal $p$-subgroup of $G$ for some prime $p$.
Suppose that $G/V$ acts faithfully on $V$. Then $k(G) \leq |G|_{p}$.
\end{lemma}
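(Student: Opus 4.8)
The plan is to prove Lemma~\ref{Brauer} by reducing, via a coprime/Clifford-type argument, to a situation where Brauer's permutation lemma on conjugacy classes and characters applies, and then to count classes through the orbits of $G/V$ acting on $V$ and on $\Irr(V)$.

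First I would recall the elementary fact that, since $V$ is an elementary abelian normal $p$-subgroup of $G$ and $G/V$ acts faithfully on $V$, the group $V$ is self-centralizing in $G$ precisely when $G/V$ acts faithfully; here we are only told $G/V$ acts faithfully on $V$, so $\bC_G(V) = V$ and we may regard $H := G/V$ as a $p$-solvable subgroup of $\GL(V)$. The standard approach (this is exactly the line used by Robinson and by Guralnick--Robinson, cited in the paper) is: the number $k(G)$ of conjugacy classes of $G = V \rtimes \ldots$ — more precisely of a group with normal subgroup $V$ and quotient $H$ acting faithfully — is governed by how $H$ permutes $V$ and $\Irr(V)$. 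By Brauer's permutation lemma, for each $h \in H$ the number of $h$-fixed points on $V$ equals the number of $h$-fixed points on $\Irr(V)$. A counting argument (Clifford theory: every irreducible character of $G$ lies over some $H$-orbit on $\Irr(V)$, and the characters over a given orbit of $\lambda$ biject with $\Irr$ of the stabilizer-type data) shows that $k(G) \le \sum_{\text{orbits } O \text{ of } H \text{ on } \Irr(V)} k(\text{something of order } |V|/|O| \cdot \ldots)$. The cleanest path is to invoke the known inequality $k(G) \le \sum_{v} k(\bC_H(v))$ summed over $H$-orbit representatives $v$ on $V$ (or the dual statement on $\Irr(V)$), then bound each $k(\bC_H(v))$ inductively.

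Then I would set up the induction on $|G|$. In the base case $V = G$ (i.e. $H = 1$) we have $k(G) = |V| = |G|_p$. For the inductive step: pick a minimal normal subgroup $W/V$ of $G/V$ — since $G$ is $p$-solvable and $G/V$ acts faithfully on the $p$-group $V$, $G/V$ has no normal $p$-subgroup, so $W/V$ is an elementary abelian $q$-group for some prime $q \ne p$. Now $W$ acts coprimely on $V$; by Glauberman's lemma / coprime action, the $W$-orbits on $\Irr(V)$ and on $V$ have compatible sizes, and one can count classes of $G$ by first counting $W$-orbits and then, for each orbit of $\lambda \in \Irr(V)$, passing to the stabilizer $G_\lambda < G$ (if the orbit is nontrivial) and applying induction, while the fixed points $\lambda$ of $W$ contribute the classes of $G/[\text{relevant piece}]$, again handled by induction since the relevant quotient is smaller and still satisfies the hypotheses ($p$-solvable, faithful action on an elementary abelian $p$-group). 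Assembling the bound, the total is at most $\sum (\text{orbit contributions}) \le |V|$, using at the key step that the orbit sizes of a coprime linear group are "large enough" — this is precisely where $p$-solvability is used, possibly through the orbit theorems of the previous sections, although for this lemma the softer coprime-action count suffices.

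The main obstacle will be the bookkeeping in the Clifford-theoretic counting of $k(G)$ in terms of orbits and stabilizers, and in particular ensuring that when one passes to the stabilizer $G_\lambda$ of a character $\lambda \in \Irr(V)$ lying in a nontrivial $W$-orbit, the inductive hypothesis genuinely applies — one needs $V \trianglelefteq G_\lambda$ still normal, $G_\lambda/V$ still faithful on $V$ (which may fail and require replacing $V$ by $V \cap \Ker$ data, or passing to $G_\lambda / \bC_{G_\lambda}(V)$), and the resulting bound $k(G_\lambda) \le |G_\lambda|_p = |V|$ to multiply correctly against the number of such orbits so that the grand total does not exceed $|V| = |G|_p$. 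I expect the delicate point is exactly balancing "number of nontrivial orbits times $|V|$" — which is too big naively — against the correct statement, which is that $k(G) \le \sum_i k(G_{\lambda_i})$ over orbit representatives $\lambda_i$ with $G_{\lambda_i}$ of $p$-part \emph{smaller} than $|V|$ on the nontrivial orbits, forcing the sum back down to $|V|$; making this precise is the heart of the argument, and is essentially the content of \cite[Theorem~13(ii)]{Guralnick-Robinson} and Robinson's \cite{Robinson}.
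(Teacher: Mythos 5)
Your proposal has a genuine gap, in two places. First, the structural reduction you build the induction on is faulty: from the faithfulness of the $G/V$-action on $V$ one gets $\bO_{p'}(G)=1$ (a normal $p'$-subgroup would centralize the normal $p$-group $V$ and hence be trivial), but \emph{not} that $G/V$ has no nontrivial normal $p$-subgroup. A $p$-solvable (even $p$-) group can act faithfully on an elementary abelian $p$-group --- think of a unipotent subgroup of $\GL(V)$ --- so your chosen minimal normal subgroup $W/V$ of $G/V$ need not be a $q$-group for $q\neq p$, and the coprime-action machinery (Glauberman correspondence, matching of orbits on $V$ and $\Irr(V)$ with compatible stabilizers) is not available at the start of the induction. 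Second, and more fundamentally, the quantitative heart of your argument --- that after writing $k(G)=\sum_{\lambda}k(G_\lambda)$ over orbit representatives on $\Irr(V)$ the contributions ``balance back down'' to $|V|$ by a ``softer coprime-action count'' --- is not a bookkeeping issue but is precisely the $k(GV)$-problem: already in the special case where $G/V$ is a $p'$-group acting faithfully on $V$, the inequality $k(G)\leq |V|$ is the celebrated $k(GV)$-theorem of Knörr, Robinson--Thompson, Gluck--Magaard--Riese--Schmid, whose proof needs CFSG-dependent large-orbit theorems and a delicate analysis of exceptional cases; no naive induction over stabilizers is known to (or can be expected to) yield it.

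The paper's proof is a short citation of exactly this deep input rather than a reproof of it: faithfulness gives $\bO_{p'}(G)=1$; for a $p$-solvable group with $\bO_{p'}(G)=1$ there is a unique $p$-block, the principal one, whose defect group is a Sylow $p$-subgroup (Theorem~2.6(a) of Schmid's book~\cite{kgvbook}); and the $p$-solvable case of Brauer's $k(B)$-problem (which Nagao reduced to the coprime $k(GV)$-problem, solved in~\cite{kgv}) then gives $k(G)\leq |G|_p$. Your closing remark that the balancing act ``is essentially the content of \cite[Theorem~13(ii)]{Guralnick-Robinson} and \cite{Robinson}'' points in the right direction --- those results rest on the same solved $k(GV)$/$k(B)$ machinery --- but then the argument is a citation of a deep theorem, not the elementary induction you sketch; as written, the proposal neither supplies that input nor correctly sets up an induction that could replace it.
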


\begin{proof}
The hypothesis implies that $\bO_{p'}(G) = 1$. But then
Theorem~2.6(a) of~\cite{kgvbook} states that $G$ has a unique
$p$-block, the principal block. The defect group of the principal
block is a Sylow $p$-subgroup of $G$. Hence by the $p$-solvable case
of Brauer's $k(B)$-problem (see~\cite{kgv}) we obtain $k(G) \leq
|G|_{p}$.
\end{proof}

\begin{lemma}\label{Zoltan}
  Let $G$ be a $\pi$-solvable finite group, where $\pi$ is the set of
  prime divisors of $|\bF^*(G)|$. Then $\bF^*(G/\Phi(G))=
  \bF^*(G)/\Phi(G)$. In particular, the set of prime divisors of
  $|\bF^*(G/\Phi(G))|$ is a subset of $\pi$.
\end{lemma}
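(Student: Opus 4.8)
The plan is to use the standard fact that $\Phi(G)$ is nilpotent, hence $\Phi(G)\le\bF(G)\le\bF^*(G)$, so that $\bF^*(G)/\Phi(G)$ makes sense, and then to show the two inclusions $\bF^*(G/\Phi(G))\supseteq\bF^*(G)/\Phi(G)$ and $\bF^*(G/\Phi(G))\subseteq\bF^*(G)/\Phi(G)$ separately. For the first inclusion I would note that $\bF(G)/\Phi(G)$ is nilpotent and normal in $G/\Phi(G)$, so $\bF(G)/\Phi(G)\le\bF(G/\Phi(G))$; and each component (quasisimple subnormal subgroup) of $G$ maps onto a component of $G/\Phi(G)$, since $\Phi(G)$ is nilpotent and a quasisimple group has no nontrivial solvable normal subgroup, so the layer $E(G)$ maps into $E(G/\Phi(G))$ modulo $\Phi(G)$. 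Together these give $\bF^*(G)\Phi(G)/\Phi(G)=\bF^*(G)/\Phi(G)\le\bF^*(G/\Phi(G))$, which holds for any finite group.

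For the reverse inclusion the $\pi$-solvability hypothesis becomes essential. Let $\overline{G}=G/\Phi(G)$ and let $K/\Phi(G)=\bF^*(\overline{G})$; I want $K\le\bF^*(G)$, equivalently (since the first inclusion already forces $\bF^*(G)\le K$) that $K=\bF^*(G)$. Because $\pi$ is the set of prime divisors of $|\bF^*(G)|$ and $G$ is $\pi$-solvable, $G$ has no component (a component would contribute a non-$\pi$-solvable composition factor, or at least forces its prime divisors into $\pi$ in a way that is incompatible with $\pi$-solvability unless it is solvable, which a component is not); more precisely, $\pi$-solvability of $G$ means every composition factor is either a $\pi$-group or $\pi'$-solvable, and combined with the definition of $\pi$ one checks $E(G)=1$, so $\bF^*(G)=\bF(G)$. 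The same argument applied to $\overline{G}$, which is also $\pi$-solvable with $\bF^*(\overline{G})$ a $\pi$-group by the first inclusion, gives $\bF^*(\overline{G})=\bF(\overline{G})$. So the claim reduces to the well-known identity $\bF(G/\Phi(G))=\bF(G)/\Phi(G)$ for the ordinary Fitting subgroup, which holds for every finite group: $\bF(G)/\Phi(G)$ is nilpotent normal so contained in $\bF(\overline{G})$, and conversely the preimage $L$ in $G$ of $\bF(\overline{G})$ satisfies $L/\Phi(G)$ nilpotent with $\Phi(G)$ nilpotent, whence $L$ is nilpotent (a group with nilpotent normal subgroup $N\le\Phi(G)$ and $L/N$ nilpotent is nilpotent, since $\Phi(G)$ consists of non-generators), so $L\le\bF(G)$.

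The last sentence of the lemma is then immediate: $|\bF^*(G/\Phi(G))|$ divides $|\bF^*(G)|$, so its prime divisors lie in $\pi$. The main obstacle I anticipate is the step showing $E(G)=1$ under the hypothesis — one must argue carefully from the definition of $\pi$-solvability that no quasisimple subnormal subgroup can occur when $\pi$ is exactly the prime set of $\bF^*(G)$; I would handle this by observing that if $E(G)\ne1$ then $\bF^*(G)$ has a non-solvable composition factor, contradicting that all prime divisors of $|\bF^*(G)|$ lie in $\pi$ and $G$ is $\pi$-solvable (so every composition factor involving only $\pi$-primes that sits inside $\bF^*(G)$ must be cyclic of prime order). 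Everything else is routine manipulation of Fitting subgroups modulo the Frattini subgroup.
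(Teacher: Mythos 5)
The reverse inclusion in your proposal rests on a non sequitur. You write that $\overline{G}=G/\Phi(G)$ has ``$\bF^*(\overline{G})$ a $\pi$-group by the first inclusion'', but the first inclusion only gives $\bF^*(G)/\Phi(G)\leq \bF^*(\overline{G})$, i.e.\ a \emph{lower} bound; it says nothing about which primes divide $|\bF^*(\overline{G})|$. Your subsequent appeal to ``the same argument'' to conclude $E(\overline{G})=1$ therefore collapses: a hypothetical component of $\overline{G}$ would, modulo its center, be a non-abelian simple composition factor of the $\pi$-solvable group $\overline{G}$ and hence a $\pi'$-group, which is perfectly compatible with $\pi$-solvability of $\overline{G}$ — no contradiction arises the way it does for $G$ itself, where components sit inside $\bF^*(G)$ and so are $\pi$-groups. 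Ruling out components of $G/\Phi(G)$ is precisely the substantive content of the lemma, and it is not a formal fact: in general $\bF^*(G/\Phi(G))\neq\bF^*(G)/\Phi(G)$, e.g.\ for a non-split extension $G=V\cdot S$ of a non-abelian simple group $S$ by a faithful irreducible module $V$ (such as the Dempwolff group $2^5\cdot\SL(5,2)$), where $\Phi(G)=V=\bF^*(G)$ while $\bF^*(G/\Phi(G))=S$. Your verification that $E(G)=1$ and your use of Gasch\"utz's identity $\bF(G/\Phi(G))=\bF(G)/\Phi(G)$ are fine, but they only deliver the easy containment $\bF^*(G)/\Phi(G)\leq\bF^*(G/\Phi(G))$.

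What is missing is an actual argument that $E(\overline{G})=1$, and this is where the paper does real work: it takes the preimage $N$ of $\bF^*(\overline{G})$, notes that $N/\bF(G)$ is a direct product of non-abelian simple $\pi'$-groups, applies Schur--Zassenhaus to write $N=\bF(G)\rtimes K$ with $K$ a $\pi'$-group, reduces (by a coprime-action argument on generating sets, plus induction) to the case $\Phi(\bF(G))=1$, decomposes $\bF(G)=\Phi(G)\oplus U$ as a $K$-module with $K$ trivial on $U$ and faithful on $\Phi(G)$, and finally uses a Frattini-type argument with the conjugacy part of Schur--Zassenhaus to get $G=\bN_G(K)\Phi(G)$, hence $K\nor G$, forcing $K\leq\bF^*(G)$ and so $K=1$. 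Note also that one cannot shortcut this with the standard lemma ``a coprime action trivial on $P/\Phi(P)$ is trivial on $P$'': $K$ is only known to centralize $\bF(G)/\Phi(G)$, and $\Phi(G)$ may be strictly larger than $\Phi(\bF(G))$. As written, your proposal proves only the easy inclusion and leaves the essential half of the lemma unproved.
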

%-----------
\begin{proof}
  Let $\overline{G} =G/\Phi(G)$ and let $N$ be the inverse image of
  $\bF^*(\overline{G})$ in $G$, that is, $N/\Phi(G)=\bF^*(G/\Phi(G))$.
  Then $\bF^*(\overline{G})=E(\overline{G})\circ
  \bF(\overline{G})$, the central product of $E(\overline{G})$, the
  subgroup generated by all the quasisimple subnormal subgroups of
  $\overline{G}$ and $\bF(\overline{G})$, the Fitting subgroup of
  $\overline{G}$. Moreover, $E(\overline{G})$ is itself a central
  product of subnormal quasisimple subgroups of $\overline{G}$. Here
  $\bZ(E(\overline{ G})) \leq \bF(\overline{G})$. As
  $\bF(\overline{G})=\bF(G)/\Phi(G)$, we have that $N/\bF(G)$ is a
  direct product of some non-abelian simple groups. (Unless,
  $N=\bF(G)$, in which case we are done.) Since $N/\bF(G)$ is a $\pi$-solvable
  group, we have that it is a $\pi'$-group. As $\bF(G)$ is a
  $\pi$-group, $(|\bF(G)|,|N:\bF(G)|)=1$, so the Schur-Zassenhaus
  theorem can be applied. We get $N=\bF(G)\rtimes K$ for some
  $\pi'$-group $K\leq G$.

  Now, we prove that we can assume that $\Phi(\bF(G))=1$. Assuming the
  converse, let $\tau:G\to G/\Phi(\bF(G))$ be the natural
  homomorphism. Then $\Phi(\bF(G))\leq \Phi(G)$, so
  $\Phi(\tau(G))=\tau(\Phi(G))$ holds. Furthermore,
  $\tau(\bF(G))=\bF(\tau(G))$, and $\bF^*(\tau(G))\leq \tau(N)$ also
  follows.

  We claim that $K$ acts faithfully on $\tau(\bF(G))$. Indeed, let
  $L\leq K$ be the kernel of this action and let $P$ be the Sylow
  $p$-subgroup of $\bF(G)$ for some $p\in \pi$. Then $L$ acts
  trivially on the vector space $P/\Phi(P)\simeq \tau(P)$. Now, we use
  the idea of the proof of \cite[III. Satz 3.17]{Huppert}, which we
  recall here only for completeness. Let $\{a_1,\ldots,a_l\}$ be a
  basis of $P/\Phi(P)$. Then the set
  $X=\{(a_1t_1,\ldots,a_lt_l)\,|\,t_1,\ldots,t_l\in\Phi(P)\}$ is fixed
  by $L$, and $L/C_L(P)$ acts semiregularly on $X$, since every
  element of $X$ provides a (minimal) generating set of $P$. It
  follows that $|L/C_L(P)|\mid |X|=|\Phi(P)|^l$. But $L$ is a
  $p'$-group, while $\Phi(P)$ is a $p$-group. So $L/C_L(P)=1$. As this
  holds for every Sylow subgroup of $\bF(G)$ we get $L\leq
  C_K(\bF(G))=1$.

  As $E(\tau(G))\leq \tau(K)$ centralizes $\bF(\tau(G))$, we get
  $E(\tau(G))=1$, so $\bF^*(\tau(G))=\bF(\tau(G))=\tau(\bF(G))$ is a
  $\pi$-group and we can use an induction argument on the order of the
  finite group.

  Now, let $\Phi(\bF(G))=1$. Then $\bF(G)$ is a direct product of
  elementary abelian groups, and it is completely reducible as a
  $K$-module by Maschke's theorem. As $\bF^*(\overline{G})=K\times
  \bF(\overline{G})$, we get $\bF(G)=\Phi(G)\oplus U$ as a $K$-module,
  where $U$ is centralized by $K$ and $K$ acts faithfully on $\Phi(G)$
  by conjugation.

  Let now $M= \bN_G(K)\geq UK$. We claim that $G=M\Phi(G)$. Notice
  that this is sufficient to prove the lemma, since $G=M$ implies that
  $K$ is normal in $G$ and so $K\leq \bF^*(G)$.

  To prove that $G=M\Phi(G)$, let $g\in G$ be arbitrary. Then $K^g\leq
  N\nor G$, and $K^g$ is also a complement of $\bF(G)$ in $N$. Thus,
  there is an $n\in N$ such that $K^g=K^n$ by the second part of the
  Schur-Zassenhaus theorem. So $g=m_1n$ for some $m_1\in M$. But
  $N=\Phi(G)UK$, where $UK\leq M$, so $N=(M\cap N)\Phi(G)$. Thus,
  $n=m_2t$ for some $m_2\in M,\ t\in \Phi(G)$. It follows that
  $g=m_1n=m_1m_2t\in M\Phi(G)$, and the proof is complete.
\end{proof}

\begin{proof}[Proof of Theorem \ref{main-theorem-Hallsubgroup}]
Suppose that we have the result for groups in consideration with a
trivial Frattini subgroup. Then for an arbitrary finite
$\pi$-solvable group $G$ where $\pi$ is the set of the prime
divisors of $|\bF^*(G)|$, we see that
$G/\Phi(G)$ is $\mu$-solvable where $\mu$ is the set of the prime
divisors of $|\bF^*(G/\Phi(G))|$, a subset of $\pi$ by Lemma \ref{Zoltan}. Therefore
\[k(G/\Phi(G)) \leq |G/\Phi(G)|_\mu \leq |G/\Phi(G)|_\pi=|G|_\pi/|\Phi(G)|,\] since every prime divisor of $|\Phi(G)|$ is in $\pi$.
Using Nagao~\cite{Nagao}, we then deduce that
\[k(G) \leq k(\Phi(G)) k(G/\Phi(G)) \leq |\Phi(G)|\cdot (|G|_\pi/|\Phi(G)|)=|G|_\pi,\] as wanted. Thus we may assume that $\Phi(G) = 1$.

Notice that the hypothesis on $G$ implies that ${\bF}^{*}(G) =
\bF(G)$ and thus $\bF(G)$ is a direct product of elementary abelian
groups and $X := G/\bF(G)$ acts faithfully on $V := \bF(G)$.
Moreover by \cite[III. 4.4]{Huppert}, we may also assume that $G$ is
the split extension $XV$ of a subgroup $X$ of $G$ and $V$.

We now proceed by induction on the size $n$ of the set $\pi$.

Let $n=1$. Then $V$ is an elementary abelian $p$-group for some
prime $p$ and $X$ acts faithfully on $V$. Thus by Lemma~\ref{Brauer}
we have $k(G) \leq |G|_{p}$, as desired.

So suppose that $n > 1$ and that the theorem is true for $n-1$. Let
$V = W \oplus U$ where $W$ is the maximal elementary abelian
$p$-subgroup of $V$ for some prime divisor $p$ of $V$.

Put $Y := XU \cong G/W$ so that $G = YW$. Put $N := \bC_{Y}(W)$.
Then $N$ contains $U$ as a normal subgroup and $N$ is normal in $G$.
In particular, $\bF^{*}(N) = \bF(N)$ (since a component in $N$ is
also a component in $G$). Now notice that $N/U$ acts faithfully on $U$. From this it follows that
the sets of the prime divisors of $|\bF(N)|$ and $|U|$ are the same
and equal to $\mu:=\pi\backslash \{p\}$. As it is clear that $N$ is
$\mu$-solvable, the induction hypothesis implies that $k(N) \leq
|N|_{\mu}$, which in turn implies that $k(N)\leq |N|_\pi$.

Now $G/N \cong (Y/N)W$ and $Y/N$ acts faithfully on $W$. It follows
that $|\bF(G/N)|$ has exactly one prime divisor, which is $p$. As
$G$ is $\pi$-solvable, the quotient $G/N$ is $p$-solvable. Hence by
Lemma~\ref{Brauer}, we have $k(G/N) \leq |G/N|_{p}$. Thus
$k(G/N)\leq |G/N|_\pi$.

Using Nagao~\cite{Nagao} again, we deduce that
\[k(G)\leq k(N)k(G/N)\leq |N|_{\pi}\cdot |G/N|_{\pi}=|G|_\pi.\]
This completes the proof of the first part of the theorem. The
second part follows readily as
\[|G:H|=|G|/|G|_\pi\leq |G|/k(G)\leq
b(G)^2.\]

For the last part, we just note that if $G$ is solvable then the set
of the prime divisors of $|\bF^*(G)|$ is precisely
that of $|\bF(G)|$.
\end{proof}

\begin{remark} Notice that the inequality $k(G) \leq |\bF(G)|$ implies Gluck's conjecture for
the solvable group $G$. However this inequality does not always hold
by considering $G = AGL(2,3)$, as kindly pointed out to us by
Robinson~\cite{Robinson2}. \end{remark}

%%%%%%%%%%%%%%%%%%%%%%

As mentioned in the introduction, Theorem~\ref{main-theorem-b(G)^4}
is a consequence of results on the so-called \emph{commuting
probability} of finite groups. The commuting probability of a finite
group $G$, denoted by $\cp(G)$, is the probability that a randomly
chosen pair of elements of $G$ commute. That is,
\[\cp(G):=\frac{1}{|G|^2}|\{(x,y)\in G\times G\mid xy=yx\}|.\]

It is well-known that $\cp(G)=k(G)/|G|$ where $k(G)$ denotes the
number of conjugacy classes in $G$.

Using the classification of finite simple groups and early work of
R.~Kn\"{o}rr \cite{Knorr} on the coprime $k(GV)$-problem, Guralnick
and Robinson~\cite{Guralnick-Robinson} obtained a quite strong bound
for the commuting probability, namely $\cp(G)\leq
|G:\bF(G)|^{-1/2}$.

\begin{proof}[Proof of Theorem~\ref{main-theorem-b(G)^4}]
The result \cite[Theorem~10]{Guralnick-Robinson} implies that
\[\frac{k(G)}{|G|} \leq |G:\bF(G)|^{-1/2},\] which in turn implies
that
\[|G:\bF(G)|^{1/2} \leq \frac{|G|}{k(G)}.\]
Since the right-hand side of the previous inequality is no greater
than $b(G)^{2}$, we deduce that
\[|G:\bF(G)|^{1/2} \leq b(G)^2,\] and the theorem follows.\end{proof}

\begin{remark} Since the proof of the inequality $\cp(G)\leq
|G:\bF(G)|^{-1/2}$ by Guralnick and Robinson depends on the
classification of finite simple groups, the above proof of
Theorem~\ref{main-theorem-b(G)^4} depends on the classification as
well.
\end{remark}

\begin{proof}[Proof of Theorem \ref{IsaacsPassman}]
Using the result \cite[Theorem~D]{Isaacs-Passman} of Isaacs and
Passman, we have that the nilpotent group $\bF(G)$ has a subnormal
abelian subgroup, say $A$, of index at most $b(\bF(G))^4$. Combining
this with Theorem~\ref{main-theorem-b(G)^4}, we have
\[|G:A|=|G:\bF(G)|\cdot |\bF(G):A|\leq b(G)^4b(\bF(G))^4\leq
b(G)^8,\] which proves the first part of the theorem.

The result~\cite[Theorem 3]{LiebeckPyber} of Liebeck and Pyber is
that every finite group $G$ contains a solvable subgroup $S$ with
$k(G) \leq |S|$. This implies that
\[|G:S| \leq |G|/k(G) \leq {b(G)}^{2}\] and we have proved the second part of the theorem.
\end{proof}

We remark here that it is asked in \cite[page~539]{LiebeckPyber}
whether in fact any finite group $G$ contains a nilpotent subgroup
$N$ with $k(G) \leq |N|$. It would be interesting if one could
answer the following weaker question, which seems nontrivial to us.

\begin{question} Is it true that every finite group $G$ has a
nilpotent subgroup of index at most $b(G)^2$?
\end{question}

%%%%%%%%%%%%%%%%%%%%%%%%%%%%%%%%%%%%%%%%%%%%%%%%%%%%%%%%%%%%%%%%%%

\section*{Acknowledgement} The authors are grateful to David Gluck for
his helpful comments and suggestions on an earlier version of the
manuscript. H.N. would like to dedicate this paper to Professor
Nguy\~\ecircumflex n~H.\,V.~H\uhorn ng for many years of friendship,
help, and encouragement.

%%%%%%%%%%%%%%%%%%%%%%%%%%%%%%%%%%%%%%%%%%%%%%%%%%%%%%%%%%%%%%%%%%%%%%

\end{document}